\documentclass[12pt, reqno]{amsart}

\usepackage[utf8]{inputenc}
\usepackage[T1]{fontenc}
\usepackage{tikz,tikz-3dplot}
\usepackage{amsmath,amsfonts,amssymb,amsmath,latexsym,mathrsfs}
\usepackage{shuffle}
\usepackage[all]{xy}
\usepackage{stackengine}
 
\usepackage{enumerate}

\usepackage{hyperref}
 
\usepackage{tabu}
\usepackage{tikz-cd} 

\usepackage{comment}
\usepackage{url}
\usepackage{tikz}

\usepackage{xfrac}

\newtheorem{theorem}{Theorem}[section]

\newtheorem{proposition}[theorem]{Proposition}

\newtheorem{corollary}[theorem]{Corollary}

\newtheorem{lemma}[theorem]{Lemma}

\begin{document}

\title{Isoclinic groups and conjugacy quandles}
\keywords{groups, isoclinism, conjugacy quandles}

\author{Mohamad \textsc{Maassarani} }
\maketitle
\begin{abstract} We show that two finite isoclinic groups of the same order have isomorphic conjugacy quandles. We also show that the converse hold under some assumptions on the groups. It happens that isoclinsim of group of the same order is equivalent to having isomorphic quandles for groups of order $n<128$. We construct two non isoclinic groups of order $128$ having isomorphic conjugacy quandles. \end{abstract}
 \section*{Introduction and main results}
Isoclinsim is relation on groups introduced by P.Hall. It is known that two isoclinc groups of the same order has same conjugacy class size statistics, i.e. the number of conjugacy classes of a given size of both groups are equal. In these notes we prove a stronger fact that is two isoclinc groups of the same order have isomorphic conjugacy quandles, meaning there is a bijection $\psi$ between both groups such that $\psi(ghg^{-1})=\psi(g)\psi(h)\psi(g)^{-1}$.\\\\
The proof of the last fact is given in section $1$, where we also provide an explicit quandle isomorphism between the conjugacy quandle of the generalized quaternion group on $4n$ elements and the conjugacy quandle of the dihedral group on $4n$ elements.\\\\
In section $2$, we are intrested in the converse of the implication : isoclinc of same order implies isomorphic conjugacy quandles. We show that the converse holds under assumptions on the groups as the assumptions that both groups : are centerless, both centers of the groups intersect the derived groups trivially or both groups has trivial Bogomolov multiplier.\\\\
The last section, section $3$, contains results obtained using theoretical arguments and the software GAP 4.15.1. We exploit that the Bogomolov multiplier of groups of order $n<128$ is almost always trivial to show that two groups of order $n<128$ are isoclinc if and only if they have isomorphic conjugacy quandles. We use the existence of a given symmetric $2$-cocycle over a group of order $64$ proven in \cite{MMb} using GAP to construct two groups of order $128$ that have isomorphic conjugacy quandles but are not isoclinic. 
\section{Isoclinc groups of the same order has isomorphic conjugacy quandles}
A non empty set equipped with a binary operation $Q\times Q \to Q, (x,y)\mapsto x\triangleright y$ is a \textit{quandle} if it satisfies the following three axioms :
\begin{itemize}
\item[1)] For $x\in Q$, $x\triangleright x = x$.
\item[2)] For $x,z \in Q$, there exist a $y\in Q$ such that $x\triangleright y=z$.
\item[3)] For $x,y,z\in Q$, $x\triangleright (y\triangleright z)=(x\triangleright y)\triangleright (x\triangleright z)$.
\end{itemize}
A map $f:Q\to Q'$ of quandles is called a \textit{quandle morphism} if for $x,y\in Q$ $f(x\triangleright y)=f(x)\triangleright f(y)$. If a quandle morphism is bigective it is a \textit{quandle isomorphism}. A group $G$ equipped with the binary operation $x\triangleright y=xyx^{-1}$ is a quandle called the \textit{conjugacy quandle} of $G$ and is usually denoted by $Conj(G)$. \\\\
Recall that the commutator map $[\:\:,\:\:]_G :G\times G \to G, (g,g')\mapsto [g,g']_G=gg'g^{-1}g'^{-1}$ factors throught a commutator map $G/Z(G) \times G/Z(G)\to G$, where $Z(G)$ is the center of $G$. for $g \in G$ we will denote by $\bar{g}$ the class of $g$ in $G/Z(G)$ and use also the notation $[\:\:, \:\:]_G$ for the induced commuator $G/Z(G) \times G/Z(G)\to G$. We hence have $[\bar{g},\bar{g}']_G=[g,g']_G$. Two groups $G$ and $H$ are \textit{isoclinc} if there exist group isomorphisms $\alpha :G/Z(G)\to H/Z(H)$ and $\phi : G' \to H'$ ($K'$ denotes the derived subgroup of $K$) such that the following diagram commutes :
$$\begin{tikzcd}
G/Z(G)\times G/Z(G)\arrow{r}{\alpha \times \alpha}\arrow{d}{[\:\:,\:\:]_G} &H/Z(H)\times H/Z(H) \arrow{d}{[\:\:,\:\:]_H} \\ 
G' \arrow{r}{\phi}  & H' 
\end{tikzcd} $$

\begin{proposition}
Let $G$ be a finite group and let $s_G$ be a section of the morphism $G\to G/Z(G)^{ab}$ $($$ab$ is for the abelianisation$)$.
\item[1)] There exist $g_1,\dots,g_{n_G}$ in the center of $G$ with $n_G=\frac{\vert Z(G) \vert}{\vert Z(G) \cap G' \vert}$ such that the distinct elements of $Z(G)G'$ are the elements $g_ig$ for $i\in\{1,\dots,n_G\}$ and $g\in G'$.
\item[2)] The distinct elements of $G$ are the elements $g_igs_G(x)$ for $i\in \{1,\dots,n_G\}$, $g\in G'$ and $x\in G/Z(G)^{ab}$.
\end{proposition}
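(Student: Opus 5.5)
Throughout I read $G/Z(G)^{ab}$ as the abelianisation of $G/Z(G)$. Since $(G/Z(G))' = G'Z(G)/Z(G)$, this group is canonically $G/Z(G)G'$. So $s_G$ is a set-theoretic section of the projection $\pi : G \to G/Z(G)G'$, meaning $\pi(s_G(x)) = x$. I do not assume $s_G$ is a homomorphism, and part 2) does not need that.

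For 1), first note that $N := Z(G)G'$ is a normal subgroup of $G$, because both factors are normal. Next apply the second isomorphism theorem to $Z(G)$ and $G'$ inside $N$:
$$N/G' = Z(G)G'/G' \cong Z(G)/(Z(G)\cap G').$$
Hence $[N:G'] = \frac{|Z(G)|}{|Z(G)\cap G'|} = n_G$.

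Now choose $g_1,\dots,g_{n_G} \in Z(G)$ as a set of representatives of the cosets of $Z(G)\cap G'$ in $Z(G)$. Under the isomorphism above, the cosets $g_iG'$ are exactly the $n_G$ distinct cosets of $G'$ in $N$. Consequently every element of $N$ can be written as $g_ig$ with $g \in G'$.

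It remains to check that these writings are distinct. Suppose $g_ig = g_jh$ with $g,h \in G'$. Then $g_j^{-1}g_i = hg^{-1}$ lies in $Z(G)\cap G'$. By the choice of representatives this forces $i=j$, and then $g=h$. Equivalently, one can count: there are $n_G|G'| = |N|$ products and they exhaust $N$.

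For 2), use the cosets of $N$ in $G$. These are indexed by $x \in G/N$, and $s_G(x)$ is a representative of the coset $x$ because $\pi(s_G(x)) = x$. So every element of $G$ is uniquely of the form $n\,s_G(x)$ with $n \in N$ and $x \in G/N$.

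Substituting the unique decomposition $n = g_ig$ from 1), every element of $G$ is $g_i g\, s_G(x)$. Uniqueness follows in two steps. If $g_igs_G(x) = g_jhs_G(y)$, then projecting by $\pi$ gives $x=y$. Cancelling $s_G(x)$ then reduces to the uniqueness in 1).

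As a consistency check, $n_G\,|G'|\,|G/N| = |N|\,[G:N] = |G|$.

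There is no real obstacle here; the argument is a double application of coset decompositions. The one point needing care is fixing the meaning of $G/Z(G)^{ab}$ as $G/Z(G)G'$. One must also make sure the $g_i$ represent the cosets of $Z(G)\cap G'$ in $Z(G)$, not arbitrary cosets of $G'$. Only this choice makes them central, which is presumably what the later quandle constructions use.
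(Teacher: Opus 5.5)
Your proof is correct and follows the same route as the paper. For 1) you use the second isomorphism theorem $Z(G)G'/G'\cong Z(G)/(Z(G)\cap G')$, and for 2) you use the identification $(G/Z(G))^{ab}\cong G/Z(G)G'$ together with the coset decomposition, while also making explicit the details the paper leaves implicit (choosing the $g_i$ as coset representatives of $Z(G)\cap G'$ in $Z(G)$, and checking uniqueness of both decompositions).
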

\begin{proof}
$1)$ follows from the fact that $Z(G)G'/G'$ is isomorphic to $Z(G)/Z(G)\cap G'$ and $2)$ is a consequence of $1)$ and the fact that $G/Z(G)^{ab}$ is isomorphic $G/Z(G)G'$.
\end{proof}

\begin{lemma}
Let $G$ and $H$ be isoclinc groups having the same order and  $\alpha :G/Z(G)\to H/Z(H)$ and $\phi : G' \to H'$ be the associated isomorphisms.
\item[1)] We have $\frac{\vert Z(G) \vert}{\vert Z(G) \cap G' \vert}=\frac{\vert Z(H) \vert}{\vert Z(H) \cap H' \vert}$.
\item[2)] The isomorphism $\alpha :G/Z(G)\to H/Z(H)$ induces an isomorphism $\bar{\alpha} : G/Z(G)^{ab} \to H/Z(H)^{ab}$.
\item[3)] If $s_G$ is a section to $G \to  G/Z(G)^{ab}$ then $H\to  H/Z(H)^{ab}$ admits a section $s_H$ such that $\alpha(\overline{s_G(x)})=\overline{s_H(\bar{\alpha}(x))}$.
\item[4)] For all $g\in G'$, we have $\alpha(\bar{g})=\overline{\phi(g)}$.
\end{lemma}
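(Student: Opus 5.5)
The plan is to prove the four items in the order $4)$, $1)$, $2)$, $3)$, since $4)$ uses only the commuting diagram and the others are essentially bookkeeping. Throughout, $G/Z(G)^{ab}$ is read as $(G/Z(G))^{ab}$. Its derived subgroup is $(G/Z(G))'=Z(G)G'/Z(G)$, so $G/Z(G)^{ab}\cong G/Z(G)G'$, as used in the Proposition.

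For $4)$, first take $g=[a,b]_G$, a single commutator. The commuting square gives $\phi(g)=[\alpha(\bar a),\alpha(\bar b)]_H=[c,d]_H$ for any $c,d\in H$ with $\bar c=\alpha(\bar a)$ and $\bar d=\alpha(\bar b)$. Hence
$$\overline{\phi(g)}=[\bar c,\bar d]=[\alpha(\bar a),\alpha(\bar b)]=\alpha([\bar a,\bar b])=\alpha(\bar g),$$
where the brackets are now commutators in $H/Z(H)$ and $G/Z(G)$, and we use that $\alpha$ is a homomorphism. Both $g\mapsto \alpha(\bar g)$ and $g\mapsto\overline{\phi(g)}$ are group homomorphisms $G'\to H/Z(H)$. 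They agree on commutators, which generate $G'$, so they agree on all of $G'$.

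For $1)$, note that $|G|=|G/Z(G)|\,|Z(G)|$. Since $|G|=|H|$ and $|G/Z(G)|=|H/Z(H)|$ via $\alpha$, we get $|Z(G)|=|Z(H)|$. Also $|G'|=|H'|$ via $\phi$. The isomorphism $\alpha$ carries $(G/Z(G))'$ onto $(H/Z(H))'$, that is, $G'Z(G)/Z(G)$ onto $H'Z(H)/Z(H)$. These groups are isomorphic to $G'/(G'\cap Z(G))$ and $H'/(H'\cap Z(H))$ respectively. Combining this with $|G'|=|H'|$ gives $|G'\cap Z(G)|=|H'\cap Z(H)|$, and the ratios in $1)$ are therefore equal.

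For $2)$, any isomorphism maps derived subgroups onto derived subgroups, so $\alpha$ descends to an isomorphism $\bar\alpha$ of abelianizations. This $\bar\alpha$ makes the square formed by $\alpha$ and the projections $\pi_G:G/Z(G)\to G/Z(G)^{ab}$ and $\pi_H:H/Z(H)\to H/Z(H)^{ab}$ commute.

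For $3)$, construct $s_H$ pointwise. Given $y\in H/Z(H)^{ab}$, put $x=\bar\alpha^{-1}(y)$. Choose any $h\in H$ whose image in $H/Z(H)$ is $\alpha(\overline{s_G(x)})$, which is possible since $H\to H/Z(H)$ is onto, and set $s_H(y)=h$. The required identity then holds by construction. It remains to check that $s_H$ is a section. The image of $h$ in $H/Z(H)^{ab}$ is
$$\pi_H\bigl(\alpha(\overline{s_G(x)})\bigr)=\bar\alpha\bigl(\pi_G(\overline{s_G(x)})\bigr)=\bar\alpha(x)=y,$$
using the commuting square from $2)$ and the fact that $s_G$ is a section.

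The only step needing real care is $4)$, namely the passage from single commutators to arbitrary elements of $G'$. This is handled by the homomorphism argument above rather than by manipulating products of commutators directly. The rest is order counting and the naturality of abelianization.
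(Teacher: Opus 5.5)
Your proof is correct and follows the paper's argument for all four items: order counting via $(G/Z(G))'\cong G'/(G'\cap Z(G))$ for 1), checking 4) on single commutators and extending to $G'$ by multiplicativity, and building $s_H$ for 3) by lifting $\alpha(\overline{s_G(\bar\alpha^{-1}(y))})$ along $H\to H/Z(H)$. The one addition is that you check explicitly, via the naturality square for $\bar\alpha$, that $s_H$ really is a section of $H\to H/Z(H)^{ab}$, which the paper leaves implicit.
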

\begin{proof}
We prove $1)$. Since $G/Z(G)$ and $H/Z(H)$ are isomoprhic and $G$ and $H$ have the same order we have that $\vert Z(G) \vert=\vert Z(H) \vert $. We need to prove that $\vert Z(G)\cap G'\vert =\vert Z(H) \cap H' \vert$. Since $G/Z(G)$ and $H/Z(H)$ are isomorphic there derived subgroups are isomorphic. But there derived subgroup are isomorphic to $G'/Z(G)\cap G'$ and $H'/Z(G)\cap H'$. Since the group are isoclinic $\vert G' \vert =\vert H'\vert$ and hence $\vert Z(G)\cap G'\vert =\vert Z(H) \cap H' \vert$. This completes the proof of $1)$. $2)$ is clear. We prove $3)$. The map $f:G/Z(G)^{ab}\to G/Z(G),x\to \overline{s_G(x)}$ is a section to $G/Z(G)\to G/Z(G)^{ab}$. Take $f'=f\circ \bar{\alpha}^{-1}$. We have that $\alpha(\overline{s_G(x)})=f'(\bar{\alpha}(x))$. Hence by composing $f'$ by any section to $H\to H/Z(H)$ we get a section $s_H$ as in the lemma. We will prove $4)$. Both sides are multiplicative with respect to product of commutators. Hence, we only need to prove the identity for commutators. Take $g=[g_1,g_2]_G$. We have that  : 
\begin{equation*}
\begin{split}
\alpha(\overline{[g_1,g_2]_G})&=\alpha([\bar{g}_1,\bar{g}_2]_{G/Z(G)}) \\
&=[\alpha(\bar{g}_1),\alpha(\bar{g}_2)]_{H/Z(H)} \\
&=\overline{[\alpha(\bar{g}_1),\alpha(\bar{g}_2)]}_H\\
&=\overline{\phi([\bar{g}_1,\bar{g}_2]_G)}\\
&=\overline{\phi([g_1,g_2]_G)}
\end{split}
\end{equation*}
This proves $4)$ for commutators and hence proves $4)$ since as we have seen that both sides of $4)$ are multiplicative with respect to product of commutators.
\end{proof}

\begin{theorem}
If $G$ and $H$ are two isoclinc finite groups of the same order then their conjugacy quandles $Conj(G)$ and $Conj(H)$ are isomorphic.
\end{theorem}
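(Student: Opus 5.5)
The plan is to build an explicit bijection $\psi : G\to H$ from the normal forms of the Proposition, and then to check the quandle identity using the conjugation formula
$$aba^{-1}=[a,b]_G\, b,$$
together with the commutative square defining isoclinism.

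\textbf{Setting up $\psi$.} Fix a section $s_G$ of $G\to G/Z(G)^{ab}$. Take $s_H$ as in part 3) of the Lemma, so that $\alpha(\overline{s_G(x)})=\overline{s_H(\bar\alpha(x))}$. By part 1) of the Lemma we have $n_G=n_H=:n$.
\begin{itemize}
\item Choose central elements $g_1,\dots,g_n\in Z(G)$ as in part 1) of the Proposition.
\item Choose central elements $h_1,\dots,h_n\in Z(H)$ playing the same role for $H$.
\end{itemize}
By part 2) of the Proposition every element of $G$ is uniquely of the form $g_i g s_G(x)$ with $g\in G'$ and $x\in G/Z(G)^{ab}$. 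Define
$$\psi(g_i g s_G(x))=h_i\,\phi(g)\, s_H(\bar\alpha(x)).$$
By part 2) of the Proposition applied to $H$, with $\phi$ and $\bar\alpha$ bijective, $\psi$ is a bijection.

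\textbf{Key step: $\psi$ is compatible with $\alpha$.} For $a=g_i g s_G(x)$, the element $g_i$ is central, so
$$\overline{\psi(a)}=\overline{\phi(g)}\;\overline{s_H(\bar\alpha(x))}=\alpha(\bar g)\,\alpha(\overline{s_G(x)})=\alpha(\bar a).$$
The middle equality uses parts 4) and 3) of the Lemma. Consequently, for all $a,b\in G$, the isoclinism square gives
$$[\psi(a),\psi(b)]_H=[\alpha(\bar a),\alpha(\bar b)]_H=\phi([a,b]_G).$$

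\textbf{The quandle identity.} Write $b=g_j g s_G(x)$. Then
$$aba^{-1}=[a,b]_G\,b=g_j\,\big([a,b]_G\, g\big)\, s_G(x),$$
using that $g_j$ is central. Since $[a,b]_G g\in G'$, this expression is already in normal form. Applying $\psi$ and using that $\phi$ is a homomorphism and $h_j$ is central,
$$\psi(aba^{-1})=h_j\,\phi([a,b]_G)\,\phi(g)\,s_H(\bar\alpha(x))=\phi([a,b]_G)\,\psi(b)=[\psi(a),\psi(b)]_H\,\psi(b)=\psi(a)\psi(b)\psi(a)^{-1}.$$
This proves that $\psi$ is a quandle isomorphism.

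The main obstacle is the compatibility $\overline{\psi(a)}=\alpha(\bar a)$. This is precisely why the normal form must use central representatives $g_i,h_i$ (invisible in $G/Z(G)$) and sections adapted to each other as in part 3) of the Lemma. The rest is bookkeeping, and the equal-order hypothesis enters only through part 1) of the Lemma, which is needed for bijectivity.
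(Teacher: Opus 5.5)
Your proof is correct and follows the paper's argument. It uses the same bijection $\psi(g_igs_G(x))=h_i\phi(g)s_H(\bar\alpha(x))$, the same conjugation formula $aba^{-1}=[a,b]_Gb$, and the same inputs: parts 1), 3) and 4) of the Lemma together with the isoclinism square. The only difference is organizational: you first isolate $\overline{\psi(a)}=\alpha(\bar a)$ for all $a\in G$, which makes the paper's identity (\ref{eq1}) immediate and spares you from stripping the central factors by hand.
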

\begin{proof}
Let $G$ and $H$ be as in the theorem. We hence have two compatible isomorphisms $\alpha :G/Z(G)\to H/Z(H)$ and $\phi : G' \to H'$ 
andthe map $\alpha$ induces an isomorphism on the abelianisation $\bar{\alpha} : G/Z(G)^{ab} \to H/Z(H)^{ab}$. Let $s_G$ be a section to the morphism $G\to G/Z(G)^{ab}$ and $s_H$ be a section to $H\to H/Z(H)^{ab}$ as in $3)$ of the previous lemma. By $1)$ of the previous lemma and the previous proposition the distinct elements of $G$ and $H$ are the elements :
$g_igs_G(x)$ and $h_ihs_H(y)$ for $i\in\{1,\dots,n\}$ $g\in G'$, $h\in H'$, $ x\in G/Z(G)^{ab}$ and $y\in H/Z(H)^{ab}$ and where $n=n_G=n_H$ and $g_1,\dots,g_n $ are some elements of $Z(G)$ and $h_1,\dots,h_n$ are some elements in $Z(H)$. We hence have a bijection $\psi : G\to H$ biven by :
$$\psi(g_igs_G(x))=h_i\phi(g)s_H(\bar{\alpha}(x)),$$
for $i\in \{1,\dots,n\}$, $g\in G'$ and $x\in G/Z(G)^{ab}$. We will prove that $\psi$ is a quandle morphism, i.e $\psi(aba^{-1})=\psi(a)\psi(b) \psi(a)^{-1}$ for $a,b\in G$. Take two elements $g_igs_G(x)$ and $g_jg's_G(x')$ of $G$ as above. We have that : 
\begin{equation*}
\begin{split}
\psi(g_igs_G(x) g_jg's_G(x')(g_igs_G(x))^{-1})&=\psi([g_igs_G(x),g_jg's_G(x')]_Gg_jg's_G(x'))\\
&= \psi(g_j([g_igs_G(x),g_jg's_G(x')]_Gg')s_G(x'))\\
&=h_j\phi([g_igs_G(x),g_jg's_G(x')]_Gg')s_H(\bar{\alpha}(x'))\\
&=h_j\phi([gs_G(x),g's_G(x')]_Gg')s_H(\bar{\alpha}(x'))\\
&=h_j\phi([gs_G(x),g's_G(x')]_G)\phi(g')s_H(\bar{\alpha}(x'))
\end{split}
\end{equation*}
On the other hand :
\begin{equation*}
\begin{split}
\psi(g_igs_G(x))\psi(g_jg's_G(x'))\psi(g_igs_G(x))^{-1}&=h_i\phi(g)s_H(\bar{\alpha}(x))h_j\phi(g')s_H(\bar{\alpha}(x'))(h_i\phi(g)s_H(\bar{\alpha}(x)))^{-1} \\
&=[h_i\phi(g)s_H(\bar{\alpha}(x)),h_j\phi(g')s_H(\bar{\alpha}(x'))]_Hh_j\phi(g')s_H(\bar{\alpha}(x'))\\
&=h_j[\phi(g)s_H(\bar{\alpha}(x)),\phi(g')s_H(\bar{\alpha}(x'))]_H\phi(g')s_H(\bar{\alpha}(x'))
\end{split}
\end{equation*}
Compairing the results of the two last equations we get that $\psi$ is a quandle morphism if and only if :
\begin{equation}\label{eq1}
\phi([gs_G(x),g's_G(x')]_G)=[\phi(g)s_H(\bar{\alpha}(x)),\phi(g')s_H(\bar{\alpha}(x'))]_H
\end{equation}
Using the compatibility of $\phi$ and $\alpha$, the fact that we have chosen $s_H$ such that $\alpha(\overline{s_G(x)})=\overline{s_H(\bar{\alpha}(x))}$ for all $x\in G/Z(G)^{ab}$, then applying $4)$ of the last lemma we get :
\begin{equation}
\begin{split}
\phi([gs_G(x),g's_G(x')]_G)&=[\alpha(\overline{gs_G(x)}),\alpha(\overline{g's_G(x')})]_H \\
&= [\alpha(\overline{g}) \alpha(\overline{s_G(x)}),\alpha(\overline{g'}) \alpha(\overline{s_G(x')})]_H \\
&=[\alpha(\overline{g}) \overline{s_H(\bar{\alpha}(x)}),\alpha(\overline{g'})\overline{s_H(\bar{\alpha}(x'))}]_H \\
&=[\overline{\phi(g)} \: \overline{s_H(\bar{\alpha}(x)}),\overline{\phi(g')}\:\overline{s_H(\bar{\alpha}(x'))}]_H\\
&=[\overline{\phi(g)s_H(\bar{\alpha}(x)}),\overline{\phi(g')s_H(\bar{\alpha}(x'))}]_H\\
&=[\phi(g)s_H(\bar{\alpha}(x)),\phi(g')s_H(\bar{\alpha}(x'))]_H
\end{split}
\end{equation}
We have proved that equation (\ref{eq1}) holds and hence $\psi$ is a quandle morphism. This proves the theorem since we have seen that $\psi$ is a bijection.
\end{proof} 

We will denote by $D_{4n}$ and $Q_{4n}$ the dihedral and the generalized quaternion group on $4n$ elements. $Q_{4n}$ and $D_{4n}$ are isoclinc. It follows from the theorem that they have isomorphic conjugacy quandles. We will give an explicit isomorphism between $Conj(D_{4n})$ and $Conj(Q_{4n})$.
Recall that the dihedral group $D_{4n}$ admits the following presentation :
\[ D_{4n} = \langle r, f \mid r^{2n} = s^2 = 1, \; srs = r^{-1} \rangle \]
and that $Q_{4n}$ admits the following presentation : 
\[ Q_{4n} = \langle a, b \mid a^{2n} = 1, \; b^2 = a^n, \; bab^{-1} = a^{-1} \rangle \]
The $4n$ distinct elements of $D_{4n}$ are : $$1,r,r^2,\dots,r^{2n-1} \quad \text{and} \quad s,rs,r^2s,\dots r^{2n-1}s.$$
The $4n$ distinct elements of $D_{4n}$ are : $$1,a,a^2,\dots,a^{2n-1} \quad \text{and} \quad b,ab,a^2b,\dots a^{2n-1}b.$$
Let $\phi :D_{4n} \to Q_{4n}$ be the map defined by :
$$\phi(r^i)=a^i \quad \text{and} \quad \phi(r^is)=a^ib,$$
for $i=0,\dots,2n-1$. It follows from the enumeration of the elements of $D_{4n}$ and $Q_{4n}$ that the map $\phi$ is bijective. 
\begin{proposition}
The map $\phi :Conj(D_{4n}) \to Conj(Q_{4n})$ is a bijective quandle morphism.
\end{proposition}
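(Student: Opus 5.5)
Bijectivity was already noted from the enumerations of $D_{4n}$ and $Q_{4n}$, so only $\phi(xyx^{-1})=\phi(x)\phi(y)\phi(x)^{-1}$ has to be proved for all $x,y\in D_{4n}$. I will check this directly in the four cases $x\in\{r^i, r^is\}$, $y\in\{r^j,r^js\}$, using closed formulas for conjugation in each group. In $D_{4n}$ we have $sr^js^{-1}=r^{-j}$ and $s^{-1}=s$. In $Q_{4n}$ we have $ba^jb^{-1}=a^{-j}$, $b^2=a^n$, $b^{-1}=a^{n}b$ (since $a^{2n}=1$), and $a^n$ is central. Hence $(a^ib)^{-1}=b^{-1}a^{-i}=a^{n+i}b$.

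\textbf{The easy cases.}
\begin{enumerate}
\item[1)] For $x=r^i$, $y=r^j$ both sides are $r^j$ and $a^j$.
\item[2)] For $x=r^i$, $y=r^js$: in $D_{4n}$ we get $r^ir^jsr^{-i}=r^{j+2i}s$. In $Q_{4n}$ we get $a^ia^jba^{-i}=a^{j+2i}b$. These correspond under $\phi$.
\item[3)] For $x=r^is$, $y=r^j$: in $D_{4n}$ we get $r^isr^jsr^{-i}=r^{-j}$. In $Q_{4n}$ we get $a^iba^jb^{-1}a^{-i}=a^{-j}$. These agree.
\end{enumerate}

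\textbf{The main case: $x=r^is$, $y=r^js$.} In $D_{4n}$,
$$(r^is)(r^js)(r^is)^{-1}=r^isr^js\,r^is=r^{i-j}\,r^is=r^{2i-j}s.$$
In $Q_{4n}$,
$$(a^ib)(a^jb)(a^{n+i}b)=a^{i-j}b^2a^{n+i}b=a^{i-j}a^{n}a^{n+i}b=a^{2i-j+2n}b=a^{2i-j}b.$$
This matches $\phi(r^{2i-j}s)$, because the two factors $a^n$ (one from $b^2$, one from $b^{-1}$) combine to $a^{2n}=1$. Exponents are read mod $2n$ throughout, which is consistent in both groups.

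This last case is the only place where the difference between the groups ($s^2=1$ versus $b^2=a^n$) could break the compatibility. I expect it to be the main point to verify, and the computation above shows that the central factors cancel.
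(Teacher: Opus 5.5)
Your proof is correct and follows the paper's own argument: both check $\phi(x\triangleright y)=\phi(x)\triangleright\phi(y)$ directly in the four cases $x\in\{r^i,r^is\}$, $y\in\{r^j,r^js\}$. Your version also spells out the step the paper leaves implicit in the last case, namely that $(a^ib)^{-1}=a^{n+i}b$ and the two central factors $a^n$ combine to $a^{2n}=1$.
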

\begin{proof}We have seen that $\phi$ is bijective.  We will prove that $\phi$ is a quandle morphism. Take $i,j \in \{ 0,\dots,2n-1\}$. We have :
$$\phi(r^i\triangleright r^j)=\phi(r^j)=a^j=a^i\triangleright a^j=\phi(r^i)\triangleright \phi(r^j).$$
We also have :
$$\phi(r^i\triangleright r^js)=\phi(r^{2i+j}s)=a^{\overline{2i+j}}b=a^{2i+j}b=a^i\triangleright a^jb=\phi(r^i)\triangleright \phi(r^js),$$
where $\overline{2i+j}$ is the remainder of $2i+j$ modulo $2n$.
One also has that : 
$$\phi(r^is\triangleright r^j)=\phi(r^{-j})=a^{-j}=a^ib\triangleright a^j=\phi(r^is)\triangleright \phi(r^j).$$
Finaly :
$$\phi(r^is\triangleright r^js)=\phi(r^{2i-j}s)=a^{2i-j}b=a^ib\triangleright a^jb=\phi(r^is)\triangleright \phi(r^js).$$
This completes the proof of the fact that $\phi$ is a quandle morphism.
\end{proof}
 \section{Study of the converse}
For $G$ a group and $g\in G$ we will denote by $c_g\in Inn(G)$ the conjugation by $g$.
\begin{proposition}
Let $\psi : G \to H$ be a quandle isomorphism.
\item[1)] $\psi$ restricts to a bijection between $Z(G) $ and $Z(H)$.
\item[2)] We have a well defined isomorphism $Inn(\psi) : Inn(G) \to Inn(H)$ given by $$Inn(\psi)(c_g)=c_{\psi(g)}.$$
\end{proposition}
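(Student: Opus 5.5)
The plan is to express both centrality and inner automorphisms purely in terms of the quandle operation $x\triangleright y=xyx^{-1}$, so that a quandle isomorphism must transport them.

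For $1)$, note that $g\in Z(G)$ if and only if $g\triangleright y=y$ for all $y\in G$, i.e. the left translation $L_g:y\mapsto g\triangleright y$ is the identity. Suppose $g\in Z(G)$ and take $y'\in H$. Since $\psi$ is surjective we write $y'=\psi(y)$, and then
$$\psi(g)\triangleright y'=\psi(g\triangleright y)=\psi(y)=y'.$$
Hence $\psi(g)\in Z(H)$. The inverse $\psi^{-1}$ is also a quandle morphism: apply $\psi$ to $\psi^{-1}(a)\triangleright\psi^{-1}(b)$ to get $a\triangleright b$. Running the same argument with $\psi^{-1}$ shows that $\psi^{-1}$ maps $Z(H)$ into $Z(G)$, so $\psi$ restricts to a bijection $Z(G)\to Z(H)$.

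For $2)$, I would first establish the key intertwining identity
$$\psi\circ c_g\circ\psi^{-1}=c_{\psi(g)}\quad\text{as maps } H\to H,$$
which is just the morphism property evaluated at $\psi^{-1}(y')$. This single identity gives everything needed.

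\emph{Well-definedness and injectivity.} If $c_g=c_{g'}$, then $c_{\psi(g)}=\psi c_g\psi^{-1}=\psi c_{g'}\psi^{-1}=c_{\psi(g')}$. Conversely, if $c_{\psi(g)}=c_{\psi(g')}$, conjugating back by $\psi$ gives $c_g=c_{g'}$.

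\emph{Multiplicativity.} Since $\psi c_g\psi^{-1}\cdot\psi c_{g'}\psi^{-1}=\psi c_gc_{g'}\psi^{-1}$, we can write $Inn(\psi)(\sigma)=\psi\sigma\psi^{-1}$ for $\sigma\in Inn(G)$. This is conjugation by the bijection $\psi$ inside the symmetric groups, which is automatically a homomorphism $Sym(G)\to Sym(H)$. We then use $c_gc_{g'}=c_{gg'}$. Concretely,
$$Inn(\psi)(c_gc_{g'})=\psi c_g\psi^{-1}\psi c_{g'}\psi^{-1}=c_{\psi(g)}c_{\psi(g')},$$
and this product lies in $Inn(H)$. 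Since the formula $Inn(\psi)(\sigma)=\psi\sigma\psi^{-1}$ is defined on all of $Inn(G)$ and agrees with $c_g\mapsto c_{\psi(g)}$, we do not need $\psi(gg')=\psi(g)\psi(g')$.

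\emph{Surjectivity.} Since $\psi$ is onto, every $c_h$ with $h\in H$ has the form $c_{\psi(g)}$.

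There is no real obstacle here. The only point needing care is to avoid trying to prove $\psi$ multiplicative, which is false in general. Defining $Inn(\psi)$ as conjugation by $\psi$ in the symmetric group circumvents this.
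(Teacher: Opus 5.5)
Your proof is correct and follows the paper's argument. Part 1) transports the condition $g\triangleright y=y$ for all $y$ through $\psi$ and $\psi^{-1}$, and part 2) rests on the same identity $c_{\psi(g)}=\psi\circ c_g\circ\psi^{-1}$, from which well-definedness, multiplicativity and bijectivity follow; you also spell out the details the paper leaves implicit (injectivity, surjectivity, and that conjugation by $\psi$ is a homomorphism $Sym(G)\to Sym(H)$).
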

\begin{proof}
An element $z\in Z(G)$ satisfies $zgz^{-1}=g$ for all $g\in G$. Since $\psi$ is a bijection, $\psi(z)$ will satifies $\psi(z)h\psi(z)^{-1}=h$ for all $h\in H$ and hence $\psi(z)$ lies in the center. This proves that $\psi (Z(G)) \subset Z(H)$. Now the inverse map $\psi^{-1}$ is also a quandle isomorphism and hence $\psi^{-1}(Z(H))\subset Z(G)$. This completes the proof of $1)$. Since $\psi$ is a quandle morphism $c_{\psi(g)}\circ \psi=\psi \circ c_g$. Hence, $c_{\psi(g)}= \psi \circ c_g \circ \psi^{-1}$. It follows that we have a well defined map $Inn(\psi) : Inn(G) \to Inn(H)$ given by $Inn(\psi)(c_g)=c_{\psi(g)}$. It follows also from the last identity that $Inn(\psi)$ is a morphism and that this morphism is invertible. This proves $2)$.
\end{proof}
\begin{corollary}
If $G$ is centerless and $Conj(G)$ is isomorphic to $Conj(H)$ for some group $H$, then $H$ is centerless and $G$ and $H$ are isomorphic and hence isoclinic.
\end{corollary}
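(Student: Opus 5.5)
The plan is to deduce the corollary directly from the preceding proposition. Let $\psi : Conj(G)\to Conj(H)$ be a quandle isomorphism. By $1)$ of the proposition, $\psi$ restricts to a bijection $Z(G)\to Z(H)$. Since $Z(G)$ is trivial, $Z(H)$ has exactly one element, so $Z(H)=\{1\}$ and $H$ is centerless.

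Next we use $2)$ of the proposition. It gives a group isomorphism $Inn(\psi): Inn(G)\to Inn(H)$ with $c_g\mapsto c_{\psi(g)}$. For any group $K$ the map $K\to Inn(K)$, $k\mapsto c_k$, is a surjective morphism with kernel $Z(K)$. Since both centers are trivial, we get isomorphisms $G\cong Inn(G)$ and $H\cong Inn(H)$. Composing,
$$G\xrightarrow{\ \sim\ } Inn(G)\xrightarrow{\ Inn(\psi)\ } Inn(H)\xrightarrow{\ \sim\ } H$$
is a group isomorphism. Explicitly it is $g\mapsto \psi(g)$: the element $c_{\psi(g)}$ determines $\psi(g)$ uniquely because $Z(H)$ is trivial. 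So $\psi$ itself is a group isomorphism.

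Finally, isomorphic groups are isoclinic. Given an isomorphism $\theta:G\to H$, it maps $Z(G)$ onto $Z(H)$ and $G'$ onto $H'$. Hence it induces $\alpha:G/Z(G)\to H/Z(H)$ and $\phi=\theta|_{G'}:G'\to H'$. The required square commutes because $\theta([g,g']_G)=[\theta(g),\theta(g')]_H$.

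No step is a genuine obstacle. The only point needing care is the identification $K\cong Inn(K)$ for centerless $K$, and this is immediate from the kernel computation.
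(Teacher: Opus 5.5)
Your proof is correct and is the argument the paper intends. The paper states the corollary without proof, as an immediate consequence of parts 1) and 2) of the preceding proposition: $Z(H)$ is in bijection with $Z(G)=1$, so $H$ is centerless, and then $G\cong Inn(G)\cong Inn(H)\cong H$.

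Your extra observation that this composite is $\psi$ itself, so that $\psi$ is already a group isomorphism, is correct and slightly sharper than what the statement asks.
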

\begin{proposition}
If two groups $G$ and $H$ have isomorphic conjugacy quandles and both centers intersect the derived groups trivially then $G$ and $H$ are isoclinic.
\end{proposition}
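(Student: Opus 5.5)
The plan is to build the two isomorphisms required by the definition of isoclinism directly from a quandle isomorphism $\psi : Conj(G)\to Conj(H)$. The hypothesis $Z(G)\cap G'=1$ (resp. $Z(H)\cap H'=1$) lets us identify the derived subgroup with the derived subgroup of the central quotient, and that identification is what produces $\phi$.

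\emph{Step 1: the isomorphism $\alpha$.} By $2)$ of the previous proposition we have the isomorphism $Inn(\psi):Inn(G)\to Inn(H)$, $c_g\mapsto c_{\psi(g)}$. Composing with the canonical isomorphisms $G/Z(G)\cong Inn(G)$ and $H/Z(H)\cong Inn(H)$, we get a group isomorphism
$$\alpha : G/Z(G)\to H/Z(H),\qquad \alpha(\bar g)=\overline{\psi(g)}.$$

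\emph{Step 2: the isomorphism $\phi$.} The projection $\pi_G : G\to G/Z(G)$ restricted to $G'$ has kernel $Z(G)\cap G'=1$. Its image is $\pi_G(G')=(G/Z(G))'$, since commutators map onto commutators. So $\pi_G$ restricts to an isomorphism $G'\cong (G/Z(G))'$, and likewise $\pi_H$ gives $H'\cong (H/Z(H))'$. Any group isomorphism maps derived subgroup onto derived subgroup, so $\alpha$ restricts to an isomorphism $(G/Z(G))'\to (H/Z(H))'$. We then set
$$\phi=(\pi_H|_{H'})^{-1}\circ\alpha\circ \pi_G|_{G'} : G'\to H'.$$
As a composite of isomorphisms, $\phi$ is an isomorphism.

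\emph{Step 3: commutativity of the diagram.} Take $g_1,g_2\in G$. Then
$$\pi_H\big(\phi([g_1,g_2]_G)\big)=\alpha\big([\bar g_1,\bar g_2]_{G/Z(G)}\big)=[\alpha(\bar g_1),\alpha(\bar g_2)]_{H/Z(H)}.$$
This is also $\pi_H$ applied to $[h_1,h_2]_H$ for any lifts $h_i$ of $\alpha(\bar g_i)$; for instance $h_i=\psi(g_i)$. Both $\phi([g_1,g_2]_G)$ and $[h_1,h_2]_H$ lie in $H'$, on which $\pi_H$ is injective, so
$$\phi([\bar g_1,\bar g_2]_G)=[\alpha(\bar g_1),\alpha(\bar g_2)]_H.$$
This is exactly the commutativity of the isoclinism square, so $G$ and $H$ are isoclinic.

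There is no serious obstacle. The only point needing care is Step 2: we must check that the image of $G'$ in $G/Z(G)$ is all of $(G/Z(G))'$, and that the trivial intersection is used on both sides, to get injectivity of $\pi_H$ on $H'$ in Step 3. Finiteness and equal orders are not needed here.
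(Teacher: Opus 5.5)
Your proof is correct and is essentially the paper's argument. The paper takes $\alpha$ from $Inn(\psi)$ via $G/Z(G)\cong Inn(G)$, and takes $\phi$ to be $Inn(\psi)$ restricted to derived subgroups, transported through the isomorphisms $G'\cong Inn(G)'$ and $H'\cong Inn(H)'$ that the trivial intersections provide. Your Step 3 just makes explicit the compatibility check, using injectivity of $\pi_H$ on $H'$, which the paper leaves implicit.
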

\begin{proof}
By the previous propositon $Inn(G)$ and $Inn(H)$ are isomorphic. We hence have a commutative diagram :

$$\begin{tikzcd}
Inn(G)\times Inn(G)\arrow{r}{\alpha}\arrow{d}{[\:\:,\:\:]_{Inn(G)}} &Inn(H)\times Inn(H) \arrow{d}{[\:\:,\:\:]_{Inn(H)}} \\ 
Inn(G)' \arrow{r}{Inn(\psi)}  & Inn(H)' 
\end{tikzcd} $$
where $\alpha= Inn(\psi)\times Inn(\psi)$ and $Inn(\psi)$ is the isomorphism of the previous proposition associated to a the quandle isomorphism $\psi :Conj(G)\to Conj(H)$. $Inn(G)$ and $Inn(H)$ are isomorphic to $G/Z(G)$ and $H/Z(H)$ respectively and the condition in the proposition implies that the derived groups of $Inn(G)$ and $Inn(H)$ are isomorphic respectevely to $G'$ and $H'$. The proposition follows.
\end{proof}

\begin{lemma}
Let $f: G \to H$ be a surjective group morphism. If $f^{-1}(Z(H))=Z(G)$ and $f$ restricts to an isomorphism from $G'$ to $H'$, then $G$ and $H$ are isoclinic.
\end{lemma}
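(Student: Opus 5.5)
The plan is to build the two isomorphisms $\alpha$ and $\phi$ required by the definition of isoclinism directly from $f$.

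First, $\alpha$. Compose $f$ with the projection $H\to H/Z(H)$ to get
$$\pi\circ f : G\to H/Z(H).$$
This map is surjective because $f$ is. Its kernel is $f^{-1}(Z(H))$, which equals $Z(G)$ by hypothesis. By the first isomorphism theorem $\pi\circ f$ induces an isomorphism
$$\alpha : G/Z(G)\to H/Z(H),\qquad \alpha(\bar g)=\overline{f(g)}.$$
It is well defined exactly because $f(Z(G))\subset Z(H)$, which is the inclusion $Z(G)\subset f^{-1}(Z(H))$. It is injective because of the reverse inclusion.

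Second, $\phi$. Take $\phi=f|_{G'}:G'\to H'$. This is an isomorphism by hypothesis. Note that $f(G')\subset H'$ holds automatically for any morphism, and surjectivity of $f$ even gives $f(G')=H'$; the real content of the hypothesis is injectivity on $G'$.

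Third, check that the square commutes. For $g_1,g_2\in G$,
$$[\alpha(\bar g_1),\alpha(\bar g_2)]_H=[\overline{f(g_1)},\overline{f(g_2)}]_H=[f(g_1),f(g_2)]_H .$$
Since $f$ is a morphism, $[f(g_1),f(g_2)]_H=f([g_1,g_2]_G)$. This equals $\phi([\bar g_1,\bar g_2]_G)$, so the diagram commutes. Here the induced commutator map on $H/Z(H)$ is well defined because commutators only depend on classes modulo the center, as recalled before the definition of isoclinism.

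There is no real obstacle. The only point needing care is well-definedness and injectivity of $\alpha$, and both come precisely from the equality $f^{-1}(Z(H))=Z(G)$, using each inclusion in turn. Everything else is a formal consequence of $f$ being a surjective morphism.
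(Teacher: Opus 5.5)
Your proof is correct and follows the same route as the paper. You use the isomorphism $G/Z(G)\to H/Z(H)$ induced by $f$ (from surjectivity and $f^{-1}(Z(H))=Z(G)$), take $\phi=f|_{G'}$, and check commutativity via $f([g_1,g_2]_G)=[f(g_1),f(g_2)]_H$; you simply spell out the first-isomorphism-theorem step and the observation $f(G')=H'$ more explicitly than the paper does.
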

\begin{proof}
The conditions $f^{-1}(Z(H))=Z(G)$ and $f$ surjective implie that $f$ induces an isomorphism $\bar{f}:G/Z(G)\to H/Z(H)$. Now take $g$ and $g'$ in $G$. We have that :
$$f([\bar{g},\bar{g}']_G)=[f(g),f(g')]_H=[\overline{f(g)},\overline{f(g')}]_H=[\bar{f}(\bar{g}),\bar{f}(\bar{g'})]_H.$$
This proves that the isomorphisms $\bar{f}: G/Z(G) \to H/Z(H)$ and the restriction of $f$ to $G'\to H'$ are compatible and hence $G$ and $H$ are isoclinic.
\end{proof}
For $Q$ a quandle one associates a group $G(Q)$ called the enveloping group :
$$G(Q)=\langle e_x ,\:x\in Q \vert e_xe_ye_x^{-1}=e_{x\triangleright y} \: \text{for} \: x,y \in Q \rangle.$$
The enveloping group is also known as associated group, structure group or adjoint group. The map $\varphi_Q : Q \to G(Q)$ mapping $x\in Q$ to $e_x$ is universal with respect to quandle morphisms $Q\to Conj(G)$; meaning if $f:Q \to Conj(G)$ is a quandle morphism then there exists a unique group morphism $f_{G(Q)}:G(Q)\to G$ such that $f_{G(Q)} \circ \varphi_Q=f$.\\\\
For $G$ a group, we will \textbf{denote} by $A(G)$ the enveloping group of $Conj(G)$ and by $H_S^2(G,\mathbb{C}^\times)$ the subgroup of the Schur multiplier $H^2(G,\mathbb{C}^\times)$ consisting of classes of symmetric $2$-cocycles, i.e. cocycles $\alpha :G\times G \to \mathbb{C}^\times$ such that $\alpha(g,h)=\alpha(h,g)$ for all $h,g \in G$.
\begin{proposition}
Let $G$ be a finite group. If $H^2_S(G,\mathbb{C}^\times)=0$, then the enveloping group $A(G)$ of $Conj(G)$ and $G$ are isoclinic.
\end{proposition}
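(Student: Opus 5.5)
We plan to apply the last Lemma to the canonical surjection $f:A(G)\to G$, $e_g\mapsto g$. This map is the group morphism given by the universal property applied to the identity quandle morphism $Conj(G)\to Conj(G)$, and it is surjective. We must check two things: that $f^{-1}(Z(G))=Z(A(G))$, and that $f$ restricts to an isomorphism $A(G)'\to G'$.

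\textbf{Step 1: the centre.} The relations $e_xe_ye_x^{-1}=e_{x\triangleright y}$ show that for any $a\in A(G)$ we have $ae_ya^{-1}=e_{f(a)yf(a)^{-1}}$. This holds on generators and then follows by induction on word length. Hence if $f(a)\in Z(G)$, then $a$ fixes every generator and so $a\in Z(A(G))$. Conversely, if $a$ is central then $f(a)$ is central, since $f$ is surjective. Thus $f^{-1}(Z(G))=Z(A(G))$. In particular $K:=\ker f$ is central, so $1\to K\to A(G)\to G\to 1$ is a central extension. Surjectivity of $f$ also gives $f(A(G)')=G'$. It therefore remains to prove $K\cap A(G)'=1$.

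\textbf{Step 2: reduction to a cohomology class.} Since $\mathbb{C}^\times$ is divisible, characters $\chi:K\to\mathbb{C}^\times$ separate the points of the abelian group $K$. So it suffices to show that every such $\chi$ is trivial on $K\cap A(G)'$. For this we use the standard exact sequence attached to a central extension:
$$Hom(A(G),\mathbb{C}^\times)\to Hom(K,\mathbb{C}^\times)\to H^2(G,\mathbb{C}^\times).$$
Here the second map sends $\chi$ to $\chi_*[c]$, where $c$ is the extension cocycle. A character $\chi$ extends to $A(G)$ exactly when it is trivial on $K\cap A(G)'$, because $A(G)/A(G)'$ is abelian and $\mathbb{C}^\times$ is divisible. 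Hence it suffices to show $\chi_*[c]=0$. Under the hypothesis $H^2_S(G,\mathbb{C}^\times)=0$, it is enough to show that $\chi_*[c]$ is represented by a symmetric cocycle.

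\textbf{Step 3: symmetry (main obstacle).} Take the set-theoretic section $s(g)=e_g$ and define the cocycle by $e_ge_h=c(g,h)e_{gh}$ with $c(g,h)\in K$. When $gh=hg$ we get
$$e_ge_he_g^{-1}=e_{ghg^{-1}}=e_h,$$
so lifts of commuting elements commute, which gives $c(g,h)=c(h,g)$ in that case. For non-commuting $g,h$ we need a further argument. We will use $e_ge_h=e_{ghg^{-1}}e_g$ to rewrite
$$c(g,h)e_{gh}=c(ghg^{-1},g)\,e_{gh}\quad\text{and}\quad c(h,g)e_{hg}=c(hgh^{-1},h)\,e_{hg}.$$
These identities give $c(g,h)=c(ghg^{-1},g)$ and $c(h,g)=c(hgh^{-1},h)$. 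The plan is to use such identities to modify $c$ by a coboundary, rescaling the section on conjugacy-class representatives, so that the result is symmetric after applying $\chi$.

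If the direct normalization proves awkward, the fallback is a characterization of $H^2_S$. A class lies in $H^2_S$ precisely when it is represented by a cocycle whose associated extension admits a section in which lifts of commuting pairs commute coherently. Establishing that characterization for $\mathbb{C}^\times$-coefficients is the delicate point.

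\textbf{Conclusion.} Once Step 3 is done, the hypothesis gives $\chi_*[c]=0$ for every $\chi$. Hence $K\cap A(G)'=1$, so $f$ restricts to an isomorphism $A(G)'\to G'$, and the last Lemma shows that $A(G)$ and $G$ are isoclinic.
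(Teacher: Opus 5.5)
Your Steps 1 and 2 are sound. Step 1 in fact proves directly the centrality of $\ker f$, which the paper takes from \cite{MM}. The proof nevertheless has a genuine gap exactly where you place the ``main obstacle'': Step 3 never shows that $\chi_*[c]$ is represented by a symmetric cocycle when $gh\neq hg$.

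\begin{itemize}
\item The identities $c(g,h)=c(ghg^{-1},g)$ and $c(h,g)=c(hgh^{-1},h)$ do not by themselves relate $c(g,h)$ to $c(h,g)$.
\item The proposed rescaling on conjugacy-class representatives is never carried out.
\item The fallback criterion targets the wrong group. ``Lifts of commuting pairs commute'' characterizes the classes whose restriction to every abelian subgroup is trivial, i.e.\ $B_{\mathbb{C}}(G)\cong B_0(G)$. That group contains $H^2_S(G,\mathbb{C}^\times)$ and is not known to equal it. Your commuting-pair computation only places $\chi_*[c]$ in $B_{\mathbb{C}}(G)$, so the hypothesis $H^2_S(G,\mathbb{C}^\times)=0$ cannot be invoked. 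Whatever extra ``coherence'' you intend is left unspecified.
\end{itemize}

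As written, then, the decisive step, which is the content the paper imports from \cite{MM} (that $A(G)'\to G'$ is an isomorphism when $H^2_S=0$), is missing.

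No normalization is actually needed: $c$ is already symmetric for all pairs. Apply the defining relation to the pair $(g,hg)$ instead of $(g,h)$, which gives $e_ge_{hg}e_g^{-1}=e_{g(hg)g^{-1}}=e_{gh}$. Since $c(h,g)\in K$ is central,
\[
c(h,g)=e_g\,c(h,g)\,e_g^{-1}=e_ge_h\bigl(e_ge_{hg}e_g^{-1}\bigr)^{-1}=e_ge_he_{gh}^{-1}=c(g,h).
\]
Hence $\chi\circ c$ is a symmetric $\mathbb{C}^\times$-valued cocycle representing $\chi_*[c]$, and the hypothesis gives $\chi_*[c]=0$. Your transgression argument then yields $K\cap A(G)'=1$, and the lemma finishes the proof.

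With this one-line fix your route becomes a self-contained proof of both facts the paper simply cites from \cite{MM}: centrality of $\ker\pi$, and $A(G)'\cong G'$ under $H^2_S(G,\mathbb{C}^\times)=0$. It also makes transparent why symmetric cocycles are the relevant obstruction: the canonical section $g\mapsto e_g$ is a quandle morphism, and that alone forces its cocycle to be symmetric.
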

\begin{proof}
The identity $id_G:G\to G$ is a quandle morphism. Hence by the universal property of $\varphi_{Conj(G)}:G \to A(G)$ we have a unique group morphism $\pi :A(G)\to G$ such that $\pi \circ \varphi_{Conj(G)}=id_G$. The mprphism $\pi$ is onto since it has a right inverse. Since it is a surjective group morphism $\pi(Z(A(G)))\subset Z(G)$. Now for $z\in Z(G)$ it follows from the definition of $A(G)$ that $e_z$ is in the center of $A(G)$ and $\pi(e_z)=\pi \circ \varphi_{Conj(G)}(z)=z$. This proves that $\pi(Z(A(G)))= Z(G)$. Moreover, the kernel of $\pi$ lies in the center of $A(G)$ (\cite{MM}). Hence, $\pi^{-1}(Z(G))=Z(A(G))$. The morphism $\pi$ restict to an isomorphism of derived groups $A(G)'\to G'$ if $H^2_S(G,\mathbb{C}^\times)=0$ (\cite{MM}). The last two condition proven for $\pi$ implie by the previous lemma that $A(G)$ and $G$ are isoclinic. We have proved the proposition.
\end{proof}
\begin{proposition}
If $G$ and $H$ are two groups having isomorphic conjugacy quandles and satisfying $H^2_S(G,\mathbb{C}^\times)=H^2_S(H,\mathbb{C}^\times)=0$, then $G$ and $H$ are isoclinic.
\end{proposition}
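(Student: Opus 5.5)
The idea is to go through the enveloping groups. Let $\psi : Conj(G)\to Conj(H)$ be a quandle isomorphism. Functoriality of the enveloping group (the universal property of $\varphi_{Conj(G)}$ applied to $\varphi_{Conj(H)}\circ\psi$, and symmetrically to $\varphi_{Conj(G)}\circ\psi^{-1}$) gives group morphisms $A(G)\to A(H)$ and $A(H)\to A(G)$. They send $e_g\mapsto e_{\psi(g)}$ and $e_h\mapsto e_{\psi^{-1}(h)}$ respectively. Their composites fix all generators, so by uniqueness they are mutually inverse, and hence $A(G)\cong A(H)$.

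Next, since $H^2_S(G,\mathbb{C}^\times)=0$, the previous proposition shows that $A(G)$ and $G$ are isoclinic. In the same way $A(H)$ and $H$ are isoclinic. Isomorphic groups are trivially isoclinic: an isomorphism $A(G)\to A(H)$ maps centers to centers and derived groups to derived groups, and it is compatible with commutators. So $A(G)$ and $A(H)$ are isoclinic.

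The remaining step is transitivity of isoclinism. Suppose $(\alpha_1,\phi_1)$ is an isoclinism from $G$ to $K$ and $(\alpha_2,\phi_2)$ is one from $K$ to $L$. Then $(\alpha_2\circ\alpha_1,\phi_2\circ\phi_1)$ makes the combined square commute, because the two squares paste together. Symmetry holds by taking inverses. Chaining $G\sim A(G)\cong A(H)\sim H$ then gives the statement.

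I expect no real obstacle, since the substantive work is already contained in the previous proposition and in the cited results of \cite{MM}. The only point to verify with care is that the induced maps on enveloping groups really are inverse isomorphisms. This is a routine uniqueness argument on the generators $e_x$.
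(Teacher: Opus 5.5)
Your proof is correct and follows the paper's own argument. Isomorphic conjugacy quandles give isomorphic enveloping groups, the previous proposition gives that $G$ is isoclinic to $A(G)$ and $H$ is isoclinic to $A(H)$, and isoclinism being an equivalence relation concludes. You simply spell out the functoriality and transitivity steps that the paper leaves implicit; as in the paper, finiteness of $G$ and $H$ is tacitly used when invoking the previous proposition.
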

\begin{proof}
If $G$ and $H$ have isomorphic conjugacy quandles then $A(G)$ and $A(H)$ are isomorphic. It follows from the previous propostion since isoclinism is an equivalence relation that $G$ and $H$ are isoclinic.
\end{proof}
The Bogomolov multiplier $B_0(G)$ of a group $G$ is the subgroup of $H^2(G,\mathbb{Q}/\mathbb{Z})$ consisting of classes whose restrictions to any abelian subgroup of $G$ are trivial. 
\begin{proposition}\cite{MM}
The group $H_S^2(G,\mathbb{C}^\times)$ is isomorphic to a subgroup of $B_0(G)$.
\end{proposition}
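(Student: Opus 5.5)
Since this proposition is cited from \cite{MM}, the plan is to rebuild the argument from the standard description of the Schur multiplier through commutator pairings. We work with $\mathbb{C}^\times$ coefficients. For a finite group, $H^2(G,\mathbb{C}^\times)\cong H^2(G,\mathbb{Q}/\mathbb{Z})$: the group $\mathbb{C}^\times$ is divisible, its torsion subgroup is $\mathbb{Q}/\mathbb{Z}$, and the quotient $\mathbb{C}^\times/\mathbb{Q}/\mathbb{Z}$ is uniquely divisible, so it has vanishing positive-degree cohomology. Under this identification $B_0(G)$ corresponds to the classes in $H^2(G,\mathbb{C}^\times)$ whose restriction to every abelian subgroup $A\le G$ is trivial. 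The goal is therefore to show that every symmetric class restricts trivially to every abelian subgroup. Once that is done, $H^2_S(G,\mathbb{C}^\times)$ sits inside $B_0(G)$ as a subgroup, and the isomorphism to a subgroup is just this inclusion.

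\textbf{Key step.} Let $A$ be an abelian subgroup and $\alpha$ a symmetric $2$-cocycle on $G$. The restriction $\alpha|_{A\times A}$ is still a symmetric cocycle on $A$, so it suffices to prove that a symmetric cocycle on a finite abelian group $A$ is a coboundary. For abelian $A$, the map sending a class $[\beta]$ to the alternating bicharacter $(a,b)\mapsto \beta(a,b)\beta(b,a)^{-1}$ is well defined on $H^2(A,\mathbb{C}^\times)$ and injective. This is the classical statement that the Schur multiplier of an abelian group is detected by commutators in its central extensions; equivalently, $H^2(A,\mathbb{C}^\times)\cong \mathrm{Hom}(\Lambda^2 A,\mathbb{C}^\times)$. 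A symmetric cocycle gives the trivial bicharacter, so its class is trivial.

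\textbf{Alternative via extensions.} The same step can be phrased with central extensions. A cocycle $\beta$ on $A$ defines a central extension $1\to\mathbb{C}^\times\to E\to A\to 1$, and $\beta$ is symmetric exactly when $E$ is abelian. An abelian extension of $A$ by the divisible group $\mathbb{C}^\times$ splits, because $\mathrm{Ext}^1_{\mathbb{Z}}(A,\mathbb{C}^\times)=0$. Hence $\beta$ is a coboundary.

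\textbf{Well-definedness and where the difficulty lies.} The set of symmetric classes is a subgroup, since products of symmetric cocycles are symmetric. Restriction to $A$ is a homomorphism, so the resulting map $H^2_S(G,\mathbb{C}^\times)\to B_0(G)$ is a group monomorphism. The main obstacle is bookkeeping rather than ideas. First, the identification of $H^2(-,\mathbb{C}^\times)$ with $H^2(-,\mathbb{Q}/\mathbb{Z})$ must be compatible with restriction to abelian subgroups; this holds by naturality. Second, one must justify that a symmetric class is represented by a symmetric cocycle; this is true by definition, because $H^2_S$ consists of classes that admit a symmetric representative.
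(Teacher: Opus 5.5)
Your proof is correct and follows the same route as the paper. Like the paper, you show that every symmetric class restricts trivially to each abelian subgroup, so $H^2_S(G,\mathbb{C}^\times)$ lies in the subgroup $B_{\mathbb{C}}(G)$ of such classes, and you then identify $B_{\mathbb{C}}(G)$ with $B_0(G)$ through the natural isomorphism $H^2(G,\mathbb{Q}/\mathbb{Z})\cong H^2(G,\mathbb{C}^\times)$ for finite $G$. You also supply details the paper only sketches: the splitting of abelian extensions by the divisible group $\mathbb{C}^\times$, and the compatibility of the coefficient change with restriction.
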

\begin{proof}
We review the idea of the proof from \cite{MM}. Let $B_\mathbb{C}(G)$ be the subgroup of $H^2(G,\mathbb{C}^\times)$ consisting of classes whoset restriction to any abelian subgroup of $G$ is trivial. The group $H^2_S(G,\mathbb{C}^\times)$ lies in $B_\mathbb{C}(G)$ and the last group is isomorphic to $B_0(G)$.
\end{proof}
We get by combining the last two propositions that :
\begin{proposition}\label{prop}
Two groups with trivial Bogomolov multipliers and having isomorphic conjugacy quandles are isoclinic. 
\end{proposition}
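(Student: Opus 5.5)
The plan is to combine the two preceding propositions: the one identifying $H^2_S(G,\mathbb{C}^\times)$ with a subgroup of $B_0(G)$, and the one deducing isoclinism from isomorphic conjugacy quandles under the vanishing of $H^2_S$. Let $G$ and $H$ be two (finite) groups with $B_0(G)=B_0(H)=0$, and let $\psi : Conj(G)\to Conj(H)$ be a quandle isomorphism.

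First, by the proposition quoted from \cite{MM}, $H^2_S(G,\mathbb{C}^\times)$ is isomorphic to a subgroup of $B_0(G)=0$, so $H^2_S(G,\mathbb{C}^\times)=0$. The same argument gives $H^2_S(H,\mathbb{C}^\times)=0$.

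Second, both hypotheses of the previous proposition now hold: the conjugacy quandles are isomorphic and both symmetric Schur multipliers vanish. Applying it gives that $G$ and $H$ are isoclinic. Unwinding that argument, the quandle isomorphism induces an isomorphism $A(G)\cong A(H)$ of enveloping groups. The proposition on $A(G)$ shows that $A(G)$ is isoclinic to $G$ and $A(H)$ is isoclinic to $H$. Transitivity and symmetry of isoclinism then finish the proof.

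There is essentially no obstacle. The only point needing care is that the cited results are stated for finite groups, so the statement should be read with $G$ and $H$ finite, as throughout the section. One should also note that the isomorphism of enveloping groups is functorial: $\psi$ and $\psi^{-1}$ induce mutually inverse group morphisms $A(G)\rightleftarrows A(H)$ by the universal property of $\varphi_{Conj(G)}$ and $\varphi_{Conj(H)}$.
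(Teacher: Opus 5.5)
Your proposal is correct and follows the paper's own argument. The paper also obtains this statement by combining two earlier results: first, $H^2_S(G,\mathbb{C}^\times)$ is isomorphic to a subgroup of $B_0(G)$; second, isomorphic conjugacy quandles together with $H^2_S(G,\mathbb{C}^\times)=H^2_S(H,\mathbb{C}^\times)=0$ imply isoclinism, proved via $A(G)\cong A(H)$, the fact that $A(G)$ is isoclinic to $G$, and transitivity of isoclinism.
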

\section{Results using GAP}
\begin{proposition}\label{prop1}
\item[1)] A group of order $n<128$ has trivial Bogomolov multiplier if $n\neq 64$.
\item[2)] There are exactly 9 isomorphism classes of groups of order $64$ with non trivial Bogolomov multiplier. The groups of these classes are isoclinic.
\end{proposition}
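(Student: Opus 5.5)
This is a computational statement, and I would prove it by combining known theoretical vanishing results for the Bogomolov multiplier with a GAP verification of the order-$64$ case.

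\textbf{Part $1)$.} I will use the standard facts, due to Bogomolov and later authors, that $B_0(G)=0$ in the following cases:
\begin{itemize}
\item[(a)] every Sylow subgroup of $G$ has trivial Bogomolov multiplier, since $B_0(G)$ embeds in $\bigoplus_p B_0(G_p)$ by restriction–corestriction;
\item[(b)] $p$-groups of order $\le p^4$;
\item[(c)] groups of order $p^5$ for $p>3$, and also for $p=2$ (this case was settled by Chu, Hu, Kang and Kunyavski\u{\i});
\item[(d)] groups of order $3^5=243$, which exceeds $128$ anyway.
\end{itemize}
For $n<128$, $n\neq 64$, every Sylow subgroup has order $p^k$. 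Either $p^k\le p^4$, or $p^k\in\{32,\,81^{\ast}\}$; here $81=3^4$ is already covered by (b), and $32=2^5$ is covered by (c). The order $2^6=64$ occurs only when $n=64$. So (a) gives $B_0(G)=0$.

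If one prefers not to rely on the literature for $32$, I would instead run GAP over the $51$ groups of order $32$. The check would compute $B_0$ either via Moravec's formula $B_0(G)\cong M(G)/M_0(G)$, using the nonabelian exterior square and the subgroup generated by commuting pairs, or via the package implementing \texttt{BogomolovMultiplier}.

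\textbf{Part $2)$.} I would loop over all $267$ groups of order $64$ in the SmallGroups library and compute $B_0$ for each, in the same way as for order $32$. This should yield exactly $9$ groups with nontrivial multiplier, matching Moravec's and Chu–Hu–Kang–Kunyavski\u{\i}'s result that these groups all lie in a single isoclinism family (family $\Gamma_{16}$ in Hall–Senior's notation).

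To confirm isoclinism without relying on the tables, I would proceed as follows:
\begin{itemize}
\item compute the invariants $G/Z(G)$, $G'$ and the commutator map for each of the $9$ groups;
\item pick a representative $G_0$ and search for isomorphisms $\alpha:G/Z(G)\to G_0/Z(G_0)$;
\item for each candidate $\alpha$, test whether $[g,h]\mapsto[\alpha\bar g,\alpha\bar h]$ gives a well-defined isomorphism $G'\to G_0'$.
\end{itemize}
Since all nine groups have the same order, this is finite and quick. A cheaper alternative is to compare GAP's isoclinism-family labelling from Hall–Senior, which is stored for groups of order $64$.

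\textbf{Main obstacle.} The hard step is making the claim rigorous rather than just a citation. The isoclinism test requires an explicit search for a compatible pair $(\alpha,\phi)$. The set of isomorphisms $G/Z(G)\to G_0/Z(G_0)$ may be large, with quotients of order up to $16$ or $32$. I would therefore first reduce by $\mathrm{Aut}(G_0/Z(G_0))$ and only then check compatibility on generators.
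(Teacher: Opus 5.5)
Your proposal is correct. For part $1)$ it takes a different route from the paper; part $2)$ is essentially the paper's argument.

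\textbf{Part $1)$.} The paper proves this part by brute force. It runs HAP's \texttt{BogomolovMultiplier} over every group in \texttt{AllSmallGroups}$(i)$ for $i\le 127$, finds exactly nine groups with nontrivial multiplier, and observes that all nine have order $64$. You instead reduce to Sylow subgroups, using the injection of the $p$-primary part of $B_0(G)$ into $B_0(G_p)$. You then note that for $n<128$, $n\neq 64$, every Sylow subgroup has order $p^k$ with $k\le 4$ or has order exactly $32$, so the vanishing results for these orders finish the job. This explains conceptually why $64$ is the first order where trouble can occur, and it confines the machine work to order $64$ (and optionally $32$). The price is that you depend on published vanishing theorems rather than on one uniform computation.

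\textbf{Part $2)$.} This is the same as the paper: both run over the $267$ groups of order $64$. Your explicit search for a compatible pair $(\alpha,\phi)$ is an unpacked version of what the paper obtains directly from \texttt{IsoclinismClasses}. The literature result placing the nine groups in the single family $\Gamma_{16}$ agrees with the computation.

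\textbf{A correction to your list of facts.} Items (c) for $p>3$ and (d) are false. Bogomolov's claim that $B_0$ vanishes for every group of order $p^5$ was disproved by Moravec. Hoshi, Kang and Kunyavski\u{\i} then showed that for every odd prime $p$, a group of order $p^5$ has $B_0\neq 0$ exactly when it lies in the isoclinism family $\Phi_{10}$; this includes three groups of order $243$. Your argument only uses the case $p=2$, which is true: all groups of order $32$ have trivial $B_0$. That result comes from Chu--Hu--Kang--Prokhorov's positive solution of Noether's problem for groups of order $32$; the Chu--Hu--Kang--Kunyavski\u{\i} paper is the one on order $64$. So the proof is unaffected, but you should delete those two items and fix the attribution.
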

\begin{proof}
For $n$ an integer the command $AllSmallGroups(n)$ in GAP returns a list of groups of order $n$. This list contains no two isomorphic groups and runs over all isomorphism classes of groups of the corresponding order. The BogoMolov multiplier of a group $G$ can be computed using the command $BogomolovMultiplier(G)$ from the "hap" package. The following code defines a function named "Bog" that takes an integer as an argument and that runs over all the lists of $AllSmallGroups(i)$ for $i<n$ and returns a table of groups of these lists with non trivial Bogomolov multiplier :\\\\
LoadPackage("hap");\\
Bog:=function(n)\\
local gr,m,i, j,tab;\\
tab:=[\:];\\
for i in [1..n] do\\ 
gr:=AllSmallGroups(i);\\
m:=Length(gr);\\
for j in [1..m] do\\
if BogomolovMultiplier(gr[j])<>[\:] then\\
Add(tab, gr[j]);\\
fi;\\
od;\\
od;\\
return tab;\\
end;\\\\
We add to the above code the lines :\\\\
A:=Bog(127);\\
a:=Length(A);\\
B:=List(A,x->Size(x));\\\\
So $A$ in the code is the list of groups from $AllSmallGroups$ of order $n<128$ and having non trivial Bogomolov multiplier. The command $a:=Length(A)$ returns $9$ and hence the list contains nine elements. Proving that there are exactly $9$ isomorphism classes of groups of order $n<128$ with non trivial multiplier. $B:=List(A,x->Size(x));$ returns the list of orders of the groups in $A$. The result is a table filled with $64$. Hence, the groups with order $n<128$ and with non trivial Bogolomov multiplier are all of order $64$. Now the command $IsoclinismClasses$ takes a list of groups and outputs a list of equivalence classes under isoclinism. Adding to gap the line codes :\\\\
C:=IsoclinismClasses(A);\\\\
c:=Length(C);\\\\\
we see that $c$ is equal to $1$ and hence $A$ constitute one equivalence class under isoclinism. This completes the proof of the proposition
\end{proof}
Combining the above proposition with proposition \ref{prop}, we get :
\begin{proposition}\label{prop2}
Two groups with order $n<128, n\neq 64$ have isomorphic conjugacy quandles if and only if they are isoclinic.
\end{proposition}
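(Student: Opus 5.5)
Both directions follow from results already in the paper, so the plan is to assemble them. For the forward direction, note that groups with isomorphic conjugacy quandles have the same order, since a quandle isomorphism $Conj(G)\to Conj(H)$ is in particular a bijection $G\to H$. We then want isoclinism from the quandle isomorphism. By $1)$ of Proposition \ref{prop1}, every group of order $n<128$ with $n\neq 64$ has trivial Bogomolov multiplier. Applying this to both $G$ and $H$ and invoking Proposition \ref{prop}, we conclude that $G$ and $H$ are isoclinic.

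For the converse, suppose $G$ and $H$ have the same order $n$ and are isoclinic. The theorem of Section 1 (isoclinic finite groups of the same order have isomorphic conjugacy quandles) then gives $Conj(G)\cong Conj(H)$ directly. Here the hypothesis that the two groups share the order $n$ matters, because isoclinism alone does not preserve order. The statement should be read as saying that $G$ and $H$ both have order $n$.

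The only subtle point is the chain behind Proposition \ref{prop}. It relies on the vanishing of $H^2_S(\cdot,\mathbb{C}^\times)$, which holds because this group embeds in $B_0(\cdot)$, and $B_0$ vanishes here by the GAP computation in Proposition \ref{prop1}. From that vanishing, $A(G)$ is isoclinic to $G$, and likewise $A(H)$ to $H$. Since $A(G)\cong A(H)$ follows from $Conj(G)\cong Conj(H)$, transitivity of isoclinism yields the claim. All of these steps are established earlier in the paper, so the proof is a short combination of citations. The main potential obstacle is only the reliance on the computational input in Proposition \ref{prop1}, which we take as given.
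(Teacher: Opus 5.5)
Your proposal is correct and follows the paper's route. The paper obtains this by combining part $1)$ of Proposition \ref{prop1} with Proposition \ref{prop} for the forward direction, with the theorem of Section 1 implicitly supplying the converse. Your remark that both groups must be read as having the same order $n$ is the right reading of the statement.
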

Since the groups of order $64$ with non trivial Bogolomov multiplier are all isoclinc it follows from the theorem of the first section that :
\begin{proposition}
Groups of order $64$ with non trivial Bogomolov multiplier have isomorphic conjugacy quandle
\end{proposition}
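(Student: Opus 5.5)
The statement follows directly from part 2) of Proposition \ref{prop1} together with the main theorem of the first section. The plan is to fix two groups $G$ and $H$ of order $64$, both with non trivial Bogomolov multiplier, and to produce a quandle isomorphism $Conj(G)\to Conj(H)$.

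First, by part 2) of Proposition \ref{prop1}, there are exactly nine isomorphism classes of groups of order $64$ with non trivial Bogomolov multiplier, and all these groups lie in a single isoclinism class. Isomorphic groups are isoclinic, and the Bogomolov multiplier is an isomorphism invariant, so $G$ is isomorphic to one of the nine representatives returned by the GAP computation, and similarly for $H$. Since isoclinism is an equivalence relation, transitivity shows that $G$ and $H$ are isoclinic. I will note explicitly that an isomorphism $G\to G_0$ induces compatible isomorphisms $G/Z(G)\to G_0/Z(G_0)$ and $G'\to G_0'$, so it is an isoclinism, and that isoclinisms compose.

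Second, $G$ and $H$ are finite isoclinic groups of the same order $64$. The theorem of Section $1$ therefore gives a bijection $\psi:G\to H$ with $\psi(aba^{-1})=\psi(a)\psi(b)\psi(a)^{-1}$, that is, an isomorphism $Conj(G)\cong Conj(H)$. This proves the statement.

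There is no real obstacle here. The substantive input is the GAP computation behind Proposition \ref{prop1}, which shows that IsoclinismClasses returns a single class. The only point requiring care is the passage from the representatives in AllSmallGroups(64) to arbitrary groups in those isomorphism classes. That passage is handled by the observation above that isomorphisms are isoclinisms and that isoclinism is transitive.
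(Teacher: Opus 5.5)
Your proof is correct and is exactly the paper's argument: part 2) of Proposition \ref{prop1} places all these groups in one isoclinism class, and the theorem of Section 1 then gives isomorphic conjugacy quandles for isoclinic finite groups of the same order. Your extra remark, that isomorphisms are isoclinisms and isoclinism is transitive, correctly covers passing from the GAP representatives to arbitrary groups; the paper leaves this step implicit.
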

\begin{proposition}
Let $G$ be a group of order $64$ with non trivial Bogolomov multiplier. A group $H$ of order $64$ have a conjugacy quandle isomorphic to the one of $G$ if and only if $H$ has a non trivial Bogomolov multiplier. 
\end{proposition}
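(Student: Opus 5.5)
The proof has two directions. The "if" direction is already available. Suppose $H$ has order $64$ and non trivial Bogomolov multiplier. By Proposition \ref{prop1}, $H$ is isomorphic to one of the nine groups in the list $A$, as is $G$. Those nine groups form a single isoclinism class, so $G$ and $H$ are isoclinic groups of the same order. The theorem of the first section then gives $Conj(G)\cong Conj(H)$; this is exactly the previous proposition.

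For the "only if" direction, suppose $H$ has order $64$, $Conj(H)\cong Conj(G)$, and $B_0(H)=0$. I want a contradiction, and my plan is to get it from an invariant of the conjugacy quandle that separates the two families. The enveloping group is such an invariant: $Conj(H)\cong Conj(G)$ implies $A(H)\cong A(G)$. Since $B_0(H)=0$, the proposition bounding $H^2_S(H,\mathbb{C}^\times)$ by a subgroup of $B_0(H)$ gives $H^2_S(H,\mathbb{C}^\times)=0$. Then the proposition on enveloping groups shows that $A(H)$ is isoclinic to $H$. In particular the natural map $\pi_H:A(H)\to H$ restricts to an isomorphism $A(H)'\cong H'$, so $|A(H)'|=|H'|$.

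For $G$, I would show that $\pi_G:A(G)'\to G'$ is \emph{not} injective, so that $|A(G)'|>|G'|$. The point is that the kernel of $\pi_G$ restricted to $A(G)'$ is governed by $H^2_S(G,\mathbb{C}^\times)$ (the result from \cite{MM}). So I need $H^2_S(G,\mathbb{C}^\times)\neq 0$, and non-triviality of $B_0(G)$ alone does not give this, since $H^2_S$ is only a subgroup of $B_0$. I would use the existence of a non-trivial symmetric $2$-cocycle over a group of order $64$, proved in \cite{MMb}. Because all nine groups are isoclinic, I expect either that the cocycle of \cite{MMb} lives on one of them, or that it can be checked on each of the nine directly in GAP.

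Next, $|G'|=|H'|$. The quandle isomorphism gives $Inn(G)\cong Inn(H)$. Since $|Z(G)|=|Z(H)|$ by part 1) of the proposition on quandle isomorphisms, and the orders are equal, I still need the derived subgroups to match; the cleanest way is a finite GAP check that all groups of order $64$ with $Conj$ isomorphic to $Conj(G)$ have $|H'|=|G'|$. Combining, $|A(G)'|>|G'|=|H'|=|A(H)'|$, which contradicts $A(G)\cong A(H)$.

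The main obstacle is making the non-injectivity of $A(G)'\to G'$ rigorous, that is, pinning down exactly how $H^2_S$ controls $\ker\pi\cap A(G)'$. If that is delicate, the fallback is purely computational. Enumerate the $267$ groups of order $64$ and discard those not matching cheap quandle invariants: the number of central elements, the multiset of conjugacy class sizes, and the isomorphism type of $Inn$. For any survivors with trivial $B_0$, compare orders of finite quotients of $A(\cdot)$, computed in GAP from the presentation of the enveloping group, with the central extension structure used to separate them.
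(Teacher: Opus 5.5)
Your ``if'' direction is the paper's argument: the nine groups are isoclinic, so the theorem of Section 1 applies.

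The ``only if'' direction has a genuine gap at the step $|G'|=|H'|$. A GAP check over ``all groups of order $64$ with $Conj$ isomorphic to $Conj(G)$'' cannot be run without already knowing which groups those are, and that is exactly what the proposition asserts. The data you do have, $Inn(G)\cong Inn(H)$ and $|Z(G)|=|Z(H)|$, fix $|(H/Z(H))'|$ but not $|H'\cap Z(H)|$. Indeed the paper's order-$128$ pair shows that the order of the derived subgroup is not a conjugacy quandle invariant in general.

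What you actually need is only $|H'|\le |G'|$. This follows by counting once you replace $G$ by a stem group among the nine, which is legitimate because the nine have isomorphic conjugacy quandles. If $Z(G)\le G'$, then
$$|H'|=|(H/Z(H))'|\,|H'\cap Z(H)|\le |(G/Z(G))'|\,|Z(G)|=|(G/Z(G))'|\,|G'\cap Z(G)|=|G'|.$$
You still have to show such a $G$ exists. A GAP check would do, or Hall's stem groups combined with the isoclinism invariance of $B_0$ and 1) of Proposition \ref{prop1}.

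The second gap is the strict inequality $|A(G)'|>|G'|$. The paper quotes \cite{MM} only in the direction $H^2_S(G,\mathbb{C}^\times)=0\Rightarrow A(G)'\cong G'$; the converse you rely on is, as you say yourself, not established. You can obtain the inequality from material at the end of the paper:
\begin{itemize}
\item[a)] The computation in the last proposition, which rests on Lemma \ref{lem}, shows that $g\mapsto (1,g)$ is a quandle morphism $Conj(G_1)\to Conj(\mathbb{C}^\times\times_\beta G_1)$.
\item[b)] The induced group morphism from $A(G_1)$ sends $A(G_1)'$ onto the derived subgroup of $\mathbb{C}^\times\times_\beta G_1$, because every commutator there has the form $[(1,g),(1,h)]$.
\item[c)] That derived subgroup is strictly larger than $G_1'$, so $|A(G_1)'|>|G_1'|$.
\item[d)] This passes to each of the nine groups via $A(G)\cong A(G_1)$ and $|G'|=|G_1'|$.
\end{itemize}
With both repairs you would have a genuinely different proof that avoids computing quotients of enveloping groups.

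As written, only your fallback could work, and it is in substance what the paper does, made concrete. The paper computes $|A(K)/\Gamma_3^2A(K)|$ for all $267$ groups $K$ of order $64$ and finds exactly nine matching the first group of the list $A$. Since the nine groups with non trivial Bogomolov multiplier have isomorphic enveloping groups, these must be exactly those nine. In your proposal, which quotient to compute, and whether it separates the groups, are left open, so that part is a plan rather than a proof.
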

\begin{proof}
We first implement a function "Env" that takes as an argument a finite group $G$ a returns the presented group $A(G)$, the enveloping group of $Conj(G)$ :\\\\
Env := function(G)\\
local elms, r, i, j, h, po, k, F, gens, rels, H; \\
elms := Elements(G);\\
r := Length(elms);\\
F := FreeGroup(r); \\
gens := GeneratorsOfGroup(F);\\
rels := [\:];\\
for i in [1..r] do\\
for j in [1..r] do\\
        h := elms[j];\\
        po:=elms[i]*elms[j]*elms[i] $\widehat{\:\:}$-1;\\
        k := Position(elms, po);\\
        Add(rels, gens[i] * gens[j] * gens[i] $\widehat{\:\:}$-1*gens[k] $\widehat{\:\:}$-1);\\
od;\\
od;\\
H := F / rels;\\
return H;\\
end;\\\\
For a group $G$ we will denote by $\Gamma_3^2G$ the third term of the lower exponent-$2$ central series. One can compute the quotient $G/\Gamma_3^2(G)$ using GAP. Recall that the list $A$ of the proof of the previous proposition is the list of groups of $AllSmallGroups(64)$ with non trivial Bogolomov multiplier. The following code returns the list $tab$ of groups $G$ of $AllSmallGroups(64)$ such that $\vert A(G)/\Gamma_3^2A(G)\vert=\vert A(G_0)/\Gamma_3^2A(G_0)\vert$ where $G_0$ is the group corresponding to the first group of the list $A$.\\\\
gr:=AllSmallGroups(64);\\
m:=Length(gr);\\
tab:=[\:];\\
G:=Env(A[1]);\\
G:=EpimorphismPGroup(G,2,2);\\
G:=Image(G);\\
for x in gr do\\
H:=Env(x);\\
H:=EpimorphismPGroup(H,2,2);\\
H:=Image(H);\\
if Size(H)=Size(G) then\\
Add(tab, x);\\
fi;\\
od;\\\\
The above code could take $2$ minutes to finish the computations. Now adding the code line :\\\\
Length(tab);\\\\
We find the list $tab$ contains $9$ groups. Since the $9$ groups of $A$ are isoclinc, they have isomorphic conjugacy quandle and hence isomorphic enveloping groups. Hence the lists $tab$ and $A$ coincide. This proves that if $Conj(G)$ is isomorphic to $Conj(H)$ and $G$ is of order $64$ with non trivial Bogmolov multiplier and $H$ is of order $64$ then $H$ has a non trivial Bogolomov multiplier.
\end{proof}
\begin{proposition}
Let $G$ and $H$ be two groups of order $64$. $G$ and $H$ have isomorphic conjugacy quandles if and only if $G$ and $H$ are isoclinic.
\end{proposition}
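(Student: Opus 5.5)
One direction needs no new work: if $G$ and $H$ are isoclinic groups of order $64$, they have the same order, so the theorem of Section 1 gives $Conj(G)\cong Conj(H)$.

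For the converse, suppose $Conj(G)\cong Conj(H)$ with $|G|=|H|=64$. I would split according to whether the Bogomolov multipliers vanish.

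\emph{Case 1: $B_0(G)\neq 0$.} The previous proposition shows that a group of order $64$ whose conjugacy quandle is isomorphic to that of $G$ must itself have nontrivial Bogomolov multiplier, so $B_0(H)\neq 0$. By Proposition \ref{prop1} 2), all groups of order $64$ with nontrivial Bogomolov multiplier lie in a single isoclinism class. Hence $G$ and $H$ are isoclinic.

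\emph{Case 2: $B_0(G)=0$.} If $B_0(H)\neq 0$, apply the previous proposition with the roles of $G$ and $H$ exchanged: since $Conj(H)\cong Conj(G)$, this forces $B_0(G)\neq 0$, a contradiction. So $B_0(H)=0$ as well, and Proposition \ref{prop} gives that $G$ and $H$ are isoclinic.

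The only delicate point is the symmetric use of the previous proposition in Case 2. That proposition is stated with the group having nontrivial multiplier as the fixed one, so exchanging $G$ and $H$ is legitimate. The genuine difficulty, the GAP computation separating the Bogomolov-nontrivial class through invariants of $A(G)$, is already contained in that earlier proposition, so no new obstacle arises here.
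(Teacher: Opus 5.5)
Your proof is correct and follows the paper's argument: the same case split on the Bogomolov multiplier, using the previous proposition, Proposition \ref{prop1} 2), Proposition \ref{prop}, and the theorem of Section 1 for the converse direction. The only difference is that you spell out the case $B_0(G)=0\neq B_0(H)$ by exchanging the roles of $G$ and $H$ in the previous proposition, which the paper covers with an implicit ``without loss of generality'' when it writes ``Say $G$ has a non trivial Bogomolov multiplier''.
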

\begin{proof}
Let $G$ and $H$ be as in the proposition. If $G$ and $H$ has trivial Bogomolov multiplier then they have isomorphic conjugacy quandles if and only if they are isoclinic (by proposition \ref{prop} and the theorem of the first section). Say $G$ has a non trivial Bogolomov multiplier and $G$ and $H$ have isomorphic conjugacy quandles. By the previous proposition the Bogolomov multiplier of $H$ is non trivial. Hence, by proposition \ref{prop1} $H$ and $G$ are isoclinic. This with the theorem of the first section completes the proof of the proposition.
\end{proof}
The last proposition with proposition \ref{prop2} give :
\begin{theorem}
Let $G$ and $H$ be two groups of order $n<128$. $G$ and $H$ has isomorphic conjugacy quandles if and only if they are isoclinic.
\end{theorem}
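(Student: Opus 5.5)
The plan is to split according to the order $n$ and combine results already established. The direction ``isoclinic implies isomorphic conjugacy quandles'' is exactly the theorem of the first section: $G$ and $H$ have the same order $n$, so isoclinism gives a quandle isomorphism $Conj(G)\cong Conj(H)$. Nothing further is needed there.

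For the converse, assume $Conj(G)\cong Conj(H)$ with $|G|=|H|=n<128$. If $n\neq 64$, then Proposition \ref{prop1} 1) says both $G$ and $H$ have trivial Bogomolov multiplier. Proposition \ref{prop} then applies directly and gives that $G$ and $H$ are isoclinic; this is the content of Proposition \ref{prop2}.

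If $n=64$, I will invoke the immediately preceding proposition, which treats groups of order $64$, and recall its two cases.
\begin{itemize}
\item If neither group has nontrivial Bogomolov multiplier, Proposition \ref{prop} again gives isoclinism.
\item If one of them, say $G$, has nontrivial multiplier, the GAP comparison of the quotients $A(\cdot)/\Gamma_3^2A(\cdot)$ forces $H$ to have nontrivial multiplier as well. Indeed, isomorphic conjugacy quandles give isomorphic enveloping groups, and hence equal sizes of these quotients.
\end{itemize}
In the second case both groups lie in the single isoclinism class of nine groups from Proposition \ref{prop1} 2), so they are isoclinic. Since the quandle isomorphism is symmetric in $G$ and $H$, this covers all cases.

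There is no real obstacle at this stage: the statement is the conjunction of Proposition \ref{prop2} and the order-$64$ proposition. The only point that needs care is that the $n=64$ case rests on computer verification. That verification consists of the enumeration of the nine groups, the \texttt{IsoclinismClasses} computation, and the invariant $|A(G)/\Gamma_3^2A(G)|$ separating these nine groups from the others. In the write-up I would state explicitly that the invariant is preserved under quandle isomorphism, because $A(G)$ depends only on $Conj(G)$.
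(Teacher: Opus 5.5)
Your proposal is correct and follows the paper's route. The paper obtains this theorem by combining Proposition~\ref{prop2} (the case $n\neq 64$, via Proposition~\ref{prop1} and Proposition~\ref{prop}) with the order-$64$ proposition, which rests on the GAP comparison of $|A(\cdot)/\Gamma_3^2A(\cdot)|$ and on the nine groups with nontrivial Bogomolov multiplier forming a single isoclinism class; the forward direction comes from the theorem of the first section, exactly as you use it.
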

We will now construct two groups of order $128$ having isomorphic conjugacy quandles but are not isoclinic. We will \textbf{denote} by $G_1$ the group of order $64$ corresponding to $SmallGroup(64,149)$ of GAP. Recall that we have called a $2$-cocycle $\alpha$ with values in $\mathbb{C}^\times$ symmetric if $\alpha(g,h)=\alpha(h,g)$ for all $h,g$ in the group. 
\begin{proposition}\cite{MMb}\label{prop3}
The group $G_1$ admits a symmetric $2$-cocycle $\alpha :G_1\times G_1 \to \mathbb{C}^\times$ having a non trivial cohomology class in $H^2(G,\mathbb{C}^\times)$. 
\end{proposition}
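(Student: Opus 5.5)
The statement is attributed to \cite{MMb}, where it is established by computer; I would reprove it the same way, reducing the existence of a symmetric non-trivial class to a finite linear-algebra computation over the group $G_1=SmallGroup(64,149)$.

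\textbf{Step 1 (reduction).} By the proposition from \cite{MM} recalled above, $H^2_S(G_1,\mathbb{C}^\times)$ embeds into $B_0(G_1)$. So it suffices to show that this subgroup is non-zero, not merely that $B_0(G_1)\neq 0$; the latter only follows from Proposition \ref{prop1} if $G_1$ is among the nine groups listed there. Since $G_1$ is a $2$-group, every class in $H^2(G_1,\mathbb{C}^\times)$ is represented by a cocycle with values in the roots of unity of $2$-power order. Hence we may work with cocycles $G_1\times G_1\to \mu_{2^k}\cong \mathbb{Z}/2^k$, for $2^k$ the exponent of the Schur multiplier $M(G_1)$ (computable with HAP). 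Then $H^2(G_1,\mathbb{C}^\times)\cong H^2(G_1,\mathbb{Z}/2^k)/(\text{image of } \mathrm{Hom}(G_1,\ldots))$, which we handle as follows.

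\textbf{Step 2 (linear system).} Write a $2$-cochain as a function $\alpha:G_1\times G_1\to\mathbb{Z}/2^k$ (additively). The cocycle condition
\[\alpha(g,h)+\alpha(gh,l)=\alpha(h,l)+\alpha(g,hl)\]
together with symmetry $\alpha(g,h)=\alpha(h,g)$ gives a linear system over $\mathbb{Z}/2^k$ in $64^2$ unknowns. Symmetry cuts this to $64\cdot 65/2$ unknowns, and normalizing $\alpha(1,g)=0$ cuts it further. The coboundaries $\delta\beta(g,h)=\beta(g)+\beta(h)-\beta(gh)$ are automatically symmetric. So $H^2_S$ is the quotient of the solution module by the image of $\delta$, and we must exhibit a symmetric solution not in the image of $\delta$.

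\textbf{Step 3 (witness and verification).} A more economical route is to use the correspondence with central extensions. A symmetric cocycle $\alpha$ corresponds to a central extension $1\to C\to E\to G_1\to 1$ in which lifts of commuting elements commute; more strongly, all lifts $\tilde g,\tilde h$ satisfy $\tilde g\tilde h\tilde g^{-1}\tilde h^{-1}=[g,h]$ computed consistently. The class is non-trivial exactly when the extension does not split compatibly. Concretely, I would search through the central extensions of $G_1$ by $\mathbb{Z}/2$ (or by $\mathbb{Z}/2^k$) and check, for each one, whether some section $s$ satisfies $s(g)s(h)=s(h)s(g)\cdot(\text{element independent of order})$. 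Equivalently, the cocycle $s(g)s(h)s(gh)^{-1}$ should be symmetric, while the class remains non-zero in $H^2(G_1,\mathbb{C}^\times)$, i.e.\ not killed after inflation to $\mathbb{C}^\times$. Once a candidate $\alpha$ is produced, checking the cocycle identity and symmetry is a mechanical pass over $64^3$ triples.

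The main obstacle is certifying non-triviality. It is not enough that $\alpha$ is not a $\mathbb{Z}/2^k$-coboundary, because it could still be a $\mathbb{C}^\times$-coboundary; this happens exactly when the class is divisible, i.e.\ comes from $\mathrm{Ext}$-type torsion. I would handle this by taking $2^k$ at least the exponent of $M(G_1)$, so that the map $H^2(G_1,\mathbb{Z}/2^k)\to H^2(G_1,\mathbb{C}^\times)$ has kernel equal to the image of $\mathrm{Hom}(G_1,\mathbb{C}^\times)$ under the Bockstein. I would then verify that $\alpha$ is not congruent modulo $\delta$ to a cocycle of the form $(g,h)\mapsto \lfloor(\chi(g)+\chi(h))/2^k\rfloor$ coming from characters. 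Alternatively, and more simply, I would check that the associated central extension $E$ has $E'$ strictly larger than $G_1'$; by the argument in the proposition about $A(G)$, this is precisely the obstruction detected by $H^2_S$.
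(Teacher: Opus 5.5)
Your strategy, certifying the statement by a finite computation on \texttt{SmallGroup(64,149)}, is the same kind of argument as the paper's, which offers nothing beyond the GAP computation it cites. But as written you only describe a computation without exhibiting a cocycle, and two ingredients of that description are wrong.

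First, coboundaries are not automatically symmetric: $\delta\beta(g,h)-\delta\beta(h,g)=\beta(hg)-\beta(gh)$, which vanishes for all $g,h$ only when $\beta$ is a class function. So ``the solution module modulo $\mathrm{im}\,\delta$'' is not well defined, since $\mathrm{im}\,\delta$ is not contained in the symmetric solutions. The symmetric classes are $Z^2_{\mathrm{sym}}$ modulo $Z^2_{\mathrm{sym}}\cap\mathrm{im}\,\delta=\delta(\text{class functions})$. The same point is what your Step 1 reduction actually needs, because ``every class has a root-of-unity representative'' does not by itself give a \emph{symmetric} one. The reduction does hold. If $\alpha$ is symmetric and $\alpha^n=\delta\lambda$, then $\delta\lambda$ is symmetric, so $\lambda$ is a class function. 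An $n$-th root $\nu$ of $\lambda$ chosen constant on conjugacy classes then makes $\alpha\,\delta\nu^{-1}$ symmetric with values in $\mu_n$; this is how the paper later produces its $\pm1$-valued $\beta$. Your final criterion (a symmetric solution outside $\mathrm{im}\,\delta$) is unaffected.

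Second, your test for non-triviality in $H^2(G_1,\mathbb{C}^\times)$ can give false positives. The kernel of $H^2(G_1,\mathbb{Z}/2^k)\to H^2(G_1,\mathbb{C}^\times)$ is $\partial\,\mathrm{Hom}(G_1,\mathbb{C}^\times)$. It contains the carry cocycles $\lfloor (c(g)+c(h))/2^j\rfloor$ of the characters $g\mapsto e^{2\pi i c(g)/2^j}$ (with $0\leq c(g)<2^j$) of \emph{every} order $2^j$, not only those with values in $\mathbb{Z}/2^k$. Choosing $2^k\geq\exp M(G_1)$ does not change this. For instance, for $\mathbb{Z}/4$ and $k=1$, the symmetric cocycle $\lfloor (a+b)/4\rfloor$ defining $\mathbb{Z}/8$ is not congruent to any of your character cocycles, yet $H^2(\mathbb{Z}/4,\mathbb{C}^\times)=0$. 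You would need $2^k\geq\exp(G_1^{ab})$ as well.

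The certificate you list as an alternative is the right one and should be the main one. If $[\alpha]=0$ in $H^2(G_1,\mathbb{C}^\times)$, then $\mathbb{C}^\times\times_\alpha G_1\cong\mathbb{C}^\times\times G_1$ as extensions, and its derived subgroup meets $\mathbb{C}^\times\times 1$ trivially. Hence so does $E'$ for $E=\mu_{2^k}\times_\alpha G_1$, and $|E'|>|G_1'|$ certifies non-triviality (by the five-term exact sequence it is also necessary). This holds for any cocycle, so symmetry must be checked separately. It comes from this splitting argument, not from the proposition on $A(G)$.

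Finally, in Step 3, ``lifts of commuting elements commute'' only characterizes classes in $B_0(G_1)$, which may be strictly larger than $H^2_S$. For a given extension class, what you must solve is the linear system $\lambda(gh)-\lambda(hg)=\alpha(g,h)-\alpha(h,g)$ for $\lambda$, not a search over sections. With these corrections the computation would establish the statement in the way the paper intends.
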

\begin{proposition}
The Bogomolov multiplier of $G_1$ is isomorphic to $\mathbb{Z}/2\mathbb{Z}$.
\end{proposition}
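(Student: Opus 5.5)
The plan is to establish the claim by a mix of a theoretical lower bound and a GAP computation, in the same style as the proof of Proposition \ref{prop1}.

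\textbf{Lower bound.} By Proposition \ref{prop3}, $G_1$ carries a symmetric $2$-cocycle $\alpha$ whose class in $H^2(G_1,\mathbb{C}^\times)$ is non trivial. So $H^2_S(G_1,\mathbb{C}^\times)\neq 0$. The proposition from \cite{MM} recalled above embeds $H^2_S(G_1,\mathbb{C}^\times)$ into $B_0(G_1)$, hence $B_0(G_1)\neq 0$. This already identifies $G_1$ as one of the nine groups of order $64$ in the list $A$ of Proposition \ref{prop1}.

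\textbf{Upper bound.} It remains to show that $B_0(G_1)$ has order exactly $2$. I would compute it directly by loading the "hap" package and evaluating BogomolovMultiplier(SmallGroup(64,149)). I expect the output $[\,2\,]$, meaning $B_0(G_1)\cong\mathbb{Z}/2\mathbb{Z}$.

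For a more conceptual check, one can use that $B_0$ is an isoclinism invariant (Moravec), together with Proposition \ref{prop1}: all nine groups in $A$ are isoclinic, so it suffices to compute $B_0$ for any one of them. The known classification of $p$-groups of order $64$ with non trivial Bogomolov multiplier (Chu--Hu--Kang--Kunyavski\u{\i}) gives $\mathbb{Z}/2\mathbb{Z}$ for that family, which agrees with the computation.

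\textbf{Main obstacle.} The only step not reducible to earlier results is the upper bound $|B_0(G_1)|\le 2$. I would settle it by the GAP computation, cross-checked against the literature value via isoclinism invariance.
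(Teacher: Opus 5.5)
Your proposal is correct and takes the same approach as the paper, whose entire proof is the HAP computation BogomolovMultiplier(SmallGroup(64,149)). Your lower bound (Proposition~\ref{prop3} combined with the embedding $H^2_S(G_1,\mathbb{C}^\times)\hookrightarrow B_0(G_1)$) and your cross-check via isoclinism invariance and the Chu--Hu--Kang--Kunyavski\u{\i} classification are consistent but not needed, since the computation alone gives $B_0(G_1)\cong\mathbb{Z}/2\mathbb{Z}$.
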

\begin{proof}
We use the "hap" pacakage of GAP and the commands :\\\\ G1:=SmallGroup(64,149); \\BogomolovMultiplier(G1);
\end{proof}
\begin{proposition}
the group $G_1$ admits a normalised symmetric $2$-cocycle $\beta :G_1\times G_1 \to \mathbb{C}^\times$ having a non trivial cohomology class in $H^2(G,\mathbb{C}^\times)$ and taking values in $\{1,-1\}$.
\end{proposition}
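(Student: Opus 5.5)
Start from the symmetric $2$-cocycle $\alpha$ of Proposition \ref{prop3}, whose class $[\alpha]\in H^2(G_1,\mathbb{C}^\times)$ is non trivial, and modify it in three steps: first normalise it, then show that its class has order $2$, and finally rescale it by a symmetric coboundary so that it takes values in $\{1,-1\}$.

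\emph{Normalisation.} Multiplying $\alpha$ by the coboundary of the constant function $g\mapsto \alpha(1,1)^{-1}$ gives a cohomologous cocycle with $\alpha(1,g)=\alpha(g,1)=1$ for all $g$. The coboundary $\partial\mu(g,h)=\mu(g)\mu(h)\mu(gh)^{-1}$ of any function $\mu$ is symmetric, so the new cocycle is still symmetric. From now on assume $\alpha$ is normalised.

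\emph{Order of the class.} By the proposition reviewed from \cite{MM}, the class of a symmetric cocycle lies in $B_\mathbb{C}(G_1)\cong B_0(G_1)$. By the previous proposition $B_0(G_1)\cong \mathbb{Z}/2\mathbb{Z}$, so $[\alpha]$ has order $2$. Hence $\alpha^2=\partial\mu$ for some function $\mu:G_1\to\mathbb{C}^\times$, and since $\alpha$ is normalised we may take $\mu(1)=1$.

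\emph{Rescaling.} Choose a square root $\nu(g)$ of $\mu(g)$ for each $g$, with $\nu(1)=1$, and set $\beta=\alpha\cdot(\partial\nu)^{-1}$. Then:
\begin{itemize}
\item $\beta$ is cohomologous to $\alpha$, so its class is non trivial;
\item $\beta$ is symmetric, as the product of a symmetric cocycle and a symmetric coboundary;
\item $\beta$ is normalised, because $\nu(1)=1$;
\item $\beta^2=\alpha^2(\partial\mu)^{-1}=1$, since $(\partial\nu)^2=\partial(\nu^2)=\partial\mu$; so $\beta$ takes values in $\{1,-1\}$.
\end{itemize}

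The only substantive input is the order-$2$ step. It needs the isomorphism $B_\mathbb{C}(G_1)\cong B_0(G_1)$ to be a group isomorphism compatible with the inclusion of $H^2_S(G_1,\mathbb{C}^\times)$, so that $[\alpha]$ really has order $2$ in $H^2(G_1,\mathbb{C}^\times)$. This is the standard identification $H^2(G,\mathbb{C}^\times)\cong H^2(G,\mathbb{Q}/\mathbb{Z})$ for finite $G$, induced by $\mathbb{Q}/\mathbb{Z}\hookrightarrow\mathbb{C}^\times$, $t\mapsto e^{2\pi i t}$, and it commutes with restriction to subgroups. If one prefers to avoid this, there is a direct argument: $H^2(G_1,\mathbb{C}^\times)$ is a finite $2$-group, and any class of order $2^k$ can be represented by a cocycle with values in $\mu_{2^k}$. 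The order-$2$ bound is what cuts the values down to $\pm1$.
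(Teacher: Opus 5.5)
Your route is the paper's: the class of $\alpha$ has order $2$ because $H^2_S(G_1,\mathbb{C}^\times)\hookrightarrow B_0(G_1)\cong\mathbb{Z}/2\mathbb{Z}$, and you then divide $\alpha$ by a square root of the coboundary $\alpha^2$. Normalising first with a constant function, rather than last as the paper does, makes no difference. The gap is that the symmetry of $\beta$ is not established.

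Your claim that the coboundary $\partial\mu(g,h)=\mu(g)\mu(h)\mu(gh)^{-1}$ of \emph{any} function is symmetric is false for non-abelian groups. Indeed $\partial\mu(g,h)\,\partial\mu(h,g)^{-1}=\mu(hg)\mu(gh)^{-1}$, so $\partial\mu$ is symmetric exactly when $\mu(gh)=\mu(hg)$ for all $g,h$. That is the same as $\mu$ being constant on conjugacy classes: put $g=y$, $h=y^{-1}x$, so that $gh=x$ and $hg=y^{-1}xy$. In your normalisation step this does no harm, because the function is constant. In the rescaling step it does, because you pick $\nu(g)$ as an arbitrary square root of $\mu(g)$, point by point. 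Since $G_1$ is non-abelian (an abelian group has trivial Bogomolov multiplier), it has conjugate elements $x\neq x'=y^{-1}xy$, and nothing in your construction prevents $\nu(x')=-\nu(x)$. Then for $g=y$, $h=y^{-1}x$ you get $\beta(g,h)\beta(h,g)^{-1}=\nu(gh)\nu(hg)^{-1}=\nu(x)\nu(x')^{-1}=-1$, so $\beta$ is not symmetric. This matters: symmetry of $\beta$ is exactly what Lemma~\ref{lem} and the final quandle isomorphism use.

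The repair is short. Since $\alpha$ is symmetric, so is $\alpha^2=\partial\mu$, hence $\mu(gh)=\mu(hg)$ and $\mu$ is a class function. This is the tacit content of the paper's remark that $\lambda(g)\lambda(h)\lambda(gh)^{-1}$ is symmetric before it takes square roots. Now choose $\nu$ constant on each conjugacy class, for instance by applying one fixed branch of the square root to the values of $\mu$; this is what the paper's notation $\sqrt{\lambda(g)}$ amounts to. Take $\nu(1)=1$, which is compatible because $\{1\}$ is a conjugacy class and $\mu(1)=1$. Then $\nu(gh)=\nu(hg)$, so $\partial\nu$ is symmetric. Your remaining checks (normalised, cohomologous to $\alpha$, $\beta^2=1$) go through unchanged.
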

\begin{proof}
Take the cocycle $\alpha$ of proposition \ref{prop3}. Since $H_S^2(G,\mathbb{C}^\times)$ is isomorphic to a subgroup of $B_0(G_1)$ and $B_0(G_1)$ is the group on two elements, the class of $\alpha$ is of order $2$ and $\alpha^2$ is a coboundary, i.e. there is a map $\lambda :G\to \mathbb{C}^\times$ such that :
$$\alpha^2(g,h)=\lambda(g)\lambda(h)\lambda(gh)^{-1}.$$
The coboundary $\lambda(g)\lambda(h)\lambda(gh)^{-1}$ is hence symmetric and its square root $\sqrt{\lambda(g)}\sqrt{\lambda(h)}\sqrt{\lambda(gh)^{-1}}$ is a symmetric coboundary. The $2$-cocycle $\beta_0$ given by :
$$\beta_0(g,h)=\alpha(g,h) \sqrt{\lambda(g)}^{-1}\sqrt{\lambda(h)}^{-1}\sqrt{\lambda(gh)},$$
is hence a symmetric cocycle cohomologus to $\alpha$ taking values in $\{1,-1\}$ indeed its square is the constant function equal to $1$. Now take $\mu:G \to \mathbb{C}^\times$ given by $\mu(1)=\beta_0(1,1)^{-1}$ and $\mu(g)=1$ for $g\in G\setminus \{1\}$. The desired cocycle of the proposition can be taken to be equal to the cocycle $\beta$ given by :
$$\beta(g,h)=\beta_0(g,h)\mu(g)\mu(h)\mu(gh)^{-1}.$$
\end{proof}
Let $\beta$ be as in the previous proposition and let $\mathbb{C}^\times \times_\beta G_1$ be the group whose underlying set is $\mathbb{C}^\times \times G_1$ and whose product is given by:
$$(x,g)(y,h)=(xy\beta(g,h),gh).$$
We denote by $\pi$ the natural projection $\mathbb{C}^\times \times_\beta G_1 \to G_1$. This projection is a group morphism and we have a central extension : 
$$1\to \mathbb{C}^\times\times 1 \to \mathbb{C}^\times \times_\beta G_1 \overset{\pi}{\to} G_1\to 1.$$
\begin{proposition}
\item[1)] The derived group of  $\mathbb{C}^\times \times_\beta G_1$ intersects $\mathbb{C}^\times \times 1$ non trivially. 
\item[2)] The derived group of $\mathbb{C}^\times \times_\beta G_1$ contains more elements then the derived group of $G_1$.
\end{proposition}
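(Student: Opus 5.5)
The plan is to prove 1) by contradiction using the non-triviality of the class of $\beta$, and then to deduce 2) from 1) by counting. Write $E=\mathbb{C}^\times \times_\beta G_1$ and $A=\mathbb{C}^\times\times 1$; $A$ is central in $E$.

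\textbf{Step 1 (setup for 1)).} Suppose, for contradiction, that $E'\cap A=1$. Consider the abelian group $E/E'$. The image of $A$ in it is $AE'/E'\cong A/(A\cap E')\cong A\cong \mathbb{C}^\times$.

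\textbf{Step 2 (a retraction onto $A$).} The group $\mathbb{C}^\times$ is divisible, hence injective as an abelian group. So $AE'/E'$ is a direct summand of $E/E'$. This gives a group morphism $\rho : E/E'\to A$ whose restriction to $AE'/E'$ is the inverse of the isomorphism $A\to AE'/E'$. Composing with $E\to E/E'$ gives a morphism $r:E\to A$ with $r(a)=a$ for all $a\in A$.

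\textbf{Step 3 (a splitting, hence a contradiction).} Let $s:G_1\to E$ be the set-theoretic section $s(g)=(1,g)$, and put $\sigma(g)=s(g)\,r(s(g))^{-1}$.
\begin{itemize}
\item Since $r(s(g))\in A$ and $A=\ker\pi$, we have $\pi\circ\sigma=id_{G_1}$.
\item The map $\sigma$ is a homomorphism. Indeed, $s(g)s(h)=\beta(g,h)s(gh)$ with $\beta(g,h)\in A$. Since $r$ is the identity on $A$ and $A$ is central, $\sigma(g)\sigma(h)=\sigma(gh)$.
\item Write $r(s(g))=(\lambda(g),1)$. The relation $s(g)s(h)=\beta(g,h)s(gh)$ then reads $\beta(g,h)=\lambda(g)\lambda(h)\lambda(gh)^{-1}$.
\end{itemize}
So $\beta$ is a coboundary. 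This contradicts the choice of $\beta$ in the previous proposition, whose cohomology class is non-trivial. Hence $E'\cap A\neq 1$, which is 1). This injectivity/retraction argument is the only real obstacle: one has to see that triviality of $E'\cap A$ forces the extension to split, and divisibility of $\mathbb{C}^\times$ is exactly what makes the retraction $r$ exist.

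\textbf{Step 4 (proof of 2)).} The morphism $\pi$ is surjective, so it maps $E'$ onto $G_1'$. The kernel of $\pi|_{E'}$ is $E'\cap A$, which is non-trivial by 1).

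To count, we check that $E'$ is finite. The commutator of $(x,g)$ and $(y,h)$ depends only on $g$ and $h$, because $A$ is central. Moreover $\beta$ takes values in $\{1,-1\}$, so all commutators lie in the finite subgroup $\{\pm1\}\times G_1$. Hence $E'$ is finite and
$$|E'|=|G_1'|\cdot|E'\cap A|\geq 2|G_1'|>|G_1'|.$$
This proves 2).
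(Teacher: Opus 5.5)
Your proof is correct and follows essentially the paper's argument. Assuming $E'\cap A=1$, divisibility of $\mathbb{C}^\times$ splits the image of $A$ off $E/E'$, which forces the extension to split and contradicts the non-triviality of the class of $\beta$; then 2) follows because $\pi$ maps $E'$ onto $G_1'$ with kernel $E'\cap A\neq 1$. The differences are only in presentation: the paper takes as complement the preimage of a complementary summand in the abelianization, while you encode the same splitting as a retraction $r$ and verify the coboundary identity explicitly. Your finiteness check on $E'$ is not needed, since $G_1'$ is finite, but it is harmless and anticipates the paper's next proposition.
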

\begin{proof}
Asume that the derived group does intersect $\mathbb{C}^\times \times 1$ trivially. Denote by $f:  \mathbb{C}^\times \times_\beta G \to  (\mathbb{C}^\times \times_\beta G_1 )^{ab}$ the abelianisation morphism. By the assumption we made, $f$ restricted to $\mathbb{C}^\times \times 1$ is injective. Since $\mathbb{C}^\times$ is divisible, the abelianisation of $ \mathbb{C}^\times \times_\beta G_1 $ splits as :
$$( \mathbb{C}^\times \times_\beta G_1 )^{ab}=f(\mathbb{C}^\times \times 1 )\oplus A,$$
for some abelian group $A$. Denote by $H$ the subgroup of $ \mathbb{C}^\times \times_\beta G_1 $ equal to $f^{-1}(A)$. One can check that $(\mathbb{C}^\times \times 1)\cap H=1$ and $(\mathbb{C}^\times \times 1)H= \mathbb{C}^\times \times_\beta G_1 $. This implies that the restriction of $\pi$ to $H$ is a group isomorphism and that the sequence splits wich contradicts the fact that the $2$-cocycle $\beta$ has a non trivial cohomology class. This proves $1)$. We prove $2)$, $\pi$ restricts to an onto morphism from the derived group of $\mathbb{C}^\times \times_\beta G_1$ to the derived group of $G_1$. By $1)$ of this proposition the kernel of this morphism is non trivial. This proves $2)$. 
\end{proof}
From now on we will \textbf{denote} by $G$ the subgroup of $\mathbb{C}^\times \times_\beta G_1$ whose elements are the elements of the subset $\{1,-1\}\times G_1 \subset \mathbb{C}^\times \times_\beta G_1$. This is indeed a subgroup since $\beta$ take values in $\{1,-1\}$. The group $G$ has order $128$.
\begin{proposition}
The derived subgroup $G'$ of $G$ has more elements then $G_1'$.
\end{proposition}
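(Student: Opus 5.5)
We want to show that $|G'|>|G_1'|$, where $G=\{1,-1\}\times G_1$ sits inside $\mathbb{C}^\times \times_\beta G_1$. The plan is to mimic the proof of the previous proposition, but with $\{\pm1\}\times 1$ playing the role of $\mathbb{C}^\times\times 1$. We cannot use divisibility here, so we argue instead through the nontriviality of the class of $\beta$.

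First, the restriction of $\pi$ to $G$ is a surjective morphism $G\to G_1$ with kernel $\{\pm1\}\times 1$, which is central. Since $\pi$ maps commutators onto commutators, $\pi(G')=G_1'$. Hence $|G'|=|G_1'|\cdot|G'\cap(\{\pm1\}\times 1)|$, and it suffices to show that $(-1,1)\in G'$.

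The key step is this: suppose instead that $G'\cap(\{\pm1\}\times 1)=1$. Then, exactly as in the earlier argument, the abelianisation $f:G\to G^{ab}$ is injective on the subgroup $\{\pm1\}\times 1\cong\mathbb{Z}/2\mathbb{Z}$. We would like a complement $A$ to $f(\{\pm1\}\times 1)$ in $G^{ab}$. Since $\mathbb{Z}/2$ is not injective among abelian groups, such a complement need not exist in general.

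This is the main obstacle. The first thing I would try is a cleaner route that avoids complements altogether: pass back to $\mathbb{C}^\times\times_\beta G_1$. The map $\mathbb{C}^\times\times G\to\mathbb{C}^\times\times_\beta G_1$, $(\lambda,(\epsilon,g))\mapsto(\lambda\epsilon,g)$, is surjective with central kernel. Using the identity $[(x,g),(y,h)]=[(1,g),(1,h)]$, which holds because $\mathbb{C}^\times\times 1$ is central, every commutator of $\mathbb{C}^\times\times_\beta G_1$ is already a commutator of elements of $G$. Hence the derived group of $\mathbb{C}^\times \times_\beta G_1$ equals $G'$.

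By the previous proposition, this derived group meets $\mathbb{C}^\times\times 1$ nontrivially. Since it is contained in $G=\{\pm1\}\times G_1$, the intersection must be $\{\pm1\}\times 1$. So $(-1,1)\in G'$, and by the counting in the second paragraph, $|G'|=2|G_1'|>|G_1'|$.

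Alternatively, part 2) of the previous proposition combined with the equality of derived groups gives $|G'|>|G_1'|$ at once. The only real check is that the commutator identity holds, namely that the central factor $x$ cancels in $[(x,g),(y,h)]$, and this is immediate from centrality.
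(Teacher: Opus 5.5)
Your proof is correct and is essentially the paper's argument: the identity $[(x,g),(y,h)]=[(1,g),(1,h)]$ shows that the derived group of $\mathbb{C}^\times\times_\beta G_1$ equals $G'$, and part 2) of the previous proposition then gives the claim; the abandoned complement attempt is not needed. Your extra counting, using that $\pi(G')=G_1'$ with central kernel $\{\pm1\}\times 1$, sharpens the conclusion to $|G'|=2|G_1'|$; the paper does not state this, but the step is valid.
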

 \begin{proof}
Since the cocycle is normalized and the elements of the form $(x,1)$ are central we have that for $(x,g),(y,h)\in \mathbb{C}^\times \times_\beta G_1$ :
$$[(x,g),(y,h)]=[(x,1)(1,g),(y,1)(1,h)]=[(1,g),(1,h)].$$
Hence the derived subgroup of $\mathbb{C}^\times \times_\beta G_1$ is a subgroup of the derived subgroup of $G$ and hence they are both equal. This proposition is therefore a consequence of statement $2)$ the previous one.
\end{proof}
\begin{lemma}\cite{MM}\label{lem}
A symmetric $2$-cocycle $\alpha: G\times G\to \mathbb{C}^\times$ satisfies the eqaution :
$$\alpha(g,hg^{-1})\alpha(h,g^{-1})\alpha(1,1)^{-1}\alpha(g,g^{-1})^{-1}=1.$$
\end{lemma}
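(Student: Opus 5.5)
The plan is to derive the identity directly from the $2$-cocycle condition, using symmetry only twice. We work with the multiplicative cocycle condition for the trivial action on $\mathbb{C}^\times$:
$$\alpha(a,b)\,\alpha(ab,c)=\alpha(a,bc)\,\alpha(b,c)\qquad (a,b,c\in G).$$

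First, we record the standard normalisation facts. Taking $(a,b,c)=(a,1,1)$ gives $\alpha(a,1)\alpha(a,1)=\alpha(a,1)\alpha(1,1)$, hence $\alpha(a,1)=\alpha(1,1)$. Taking $(a,b,c)=(1,1,c)$ gives $\alpha(1,c)=\alpha(1,1)$ in the same way. These are the only places where the factor $\alpha(1,1)$ in the statement will come from.

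Second, we rewrite the left-hand product. Applying the cocycle condition to $(a,b,c)=(g,h,g^{-1})$ gives
$$\alpha(g,hg^{-1})\,\alpha(h,g^{-1})=\alpha(g,h)\,\alpha(gh,g^{-1}).$$
So it suffices to show that $\alpha(g,h)\alpha(gh,g^{-1})=\alpha(1,1)\alpha(g,g^{-1})$.

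Third, we use symmetry to turn the second factor into something the cocycle condition handles: $\alpha(gh,g^{-1})=\alpha(g^{-1},gh)$. Applying the cocycle condition to $(a,b,c)=(g^{-1},g,h)$ gives
$$\alpha(g^{-1},g)\,\alpha(1,h)=\alpha(g^{-1},gh)\,\alpha(g,h).$$
Hence $\alpha(g,h)\alpha(gh,g^{-1})=\alpha(g^{-1},g)\alpha(1,h)$. By symmetry $\alpha(g^{-1},g)=\alpha(g,g^{-1})$, and by the first step $\alpha(1,h)=\alpha(1,1)$. Combining the three steps gives exactly
$$\alpha(g,hg^{-1})\,\alpha(h,g^{-1})\,\alpha(1,1)^{-1}\,\alpha(g,g^{-1})^{-1}=1.$$

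The only real obstacle is choosing the right triples in the cocycle condition. A naive attempt using $(h,g,g^{-1})$ produces $\alpha(hg,g^{-1})$ rather than $\alpha(gh,g^{-1})$, and these cannot be matched directly. The key move is to flip $\alpha(gh,g^{-1})$ by symmetry before applying the cocycle condition. Everything else is routine bookkeeping.
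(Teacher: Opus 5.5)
Your proof is correct: each instance of the cocycle condition you use, at $(a,1,1)$, $(1,1,c)$, $(g,h,g^{-1})$ and $(g^{-1},g,h)$, is applied correctly, and the chain closes to $\alpha(g,hg^{-1})\alpha(h,g^{-1})=\alpha(1,1)\alpha(g,g^{-1})$.

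The paper organizes the computation differently. It first replaces $h$ by $hg$, which is harmless since $h\mapsto hg$ is a bijection of $G$. This turns the left-hand side into $\alpha(g,h)\alpha(hg,g^{-1})\alpha(1,1)^{-1}\alpha(g,g^{-1})^{-1}$ without using any cocycle identity. A single instance of the cocycle condition at $(h,g,g^{-1})$ (precisely the triple you set aside as a dead end), together with $\alpha(h,1)=\alpha(1,1)$, then reduces this to $\alpha(g,h)\alpha(h,g)^{-1}$, which is $1$ by symmetry. So the mismatch between $\alpha(hg,g^{-1})$ and $\alpha(gh,g^{-1})$ is removed by reparametrizing rather than by flipping arguments.

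The paper's route makes one thing transparent. For an arbitrary, not necessarily symmetric, $2$-cocycle, the expression in the lemma equals $\alpha(g,k)\alpha(k,g)^{-1}$ with $k=hg^{-1}$. Hence the lemma, quantified over all $g,h$, is equivalent to symmetry of $\alpha$. This shows why symmetry is exactly the right hypothesis for the identity map $H\to G$ in the final proposition.

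Your route instead gives a direct chain of equalities for each fixed pair $(g,h)$, at the cost of one extra cocycle identity. Your second appeal to symmetry, $\alpha(g^{-1},g)=\alpha(g,g^{-1})$, actually holds for every $2$-cocycle: apply the cocycle condition to $(g,g^{-1},g)$ and use your normalization facts. So you really use symmetry only once, on the pair $(gh,g^{-1})$.
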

\begin{proof}
By replacing $h$ by $hg$ in the equation, we get the "equivalent" equation
\begin{equation}\label{eq2}
\alpha(g,h)\alpha(hg,g^{-1})\alpha(1,1)^{-1}\alpha(g,g^{-1})^{-1}=1.
\end{equation} 
We will prove that the last equation holds. By the cocycle condition : 
$$\alpha(hg,g^{-1}) \alpha(h,g)\alpha(h,1)^{-1}\alpha(g,g^{-1})^{-1}=1.$$
Hence, the right hand side of equation (\ref{eq2}) is equal to :
$$\alpha(g,h)\alpha(1,1)^{-1}\alpha(h,g)^{-1}\alpha(h,1).$$
Using the cocycle condition one proves that $\alpha(1,1)=\alpha(h,1)$ and by symmetry of the cocycle $\alpha(g,h)=\alpha(h,g)$. Hence, the right-hand side of equation (\ref{eq2}) is equal to $1$ and equation (\ref{eq2}) holds. This proves the lemma.
 \end{proof}
Let $H$ be the group (direct product) $\{1,-1\}\times G_1$. $H$ has order $128$.
\begin{proposition} 
$G$ and $H$ has isomorphic conjugacy quandles but are not isoclininc.
\end{proposition}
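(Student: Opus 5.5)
Two things must be shown: $G$ and $H$ are not isoclinic, and there is an explicit quandle isomorphism between $Conj(G)$ and $Conj(H)$.

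\textbf{Non-isoclinism.} Since $H=\{1,-1\}\times G_1$ is a direct product with a central factor, $H'=G_1'$ (identified with $1\times G_1'$). By the previous proposition, $G'$ has more elements than $G_1'=H'$. Isoclinic groups have isomorphic derived subgroups, so $G$ and $H$ are not isoclinic.

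\textbf{The bijection.} Take the identity on underlying sets,
$$\psi:G\to H,\qquad \psi(x,g)=(x,g),\quad x\in\{1,-1\},\ g\in G_1 .$$
The elements $(x,1)$ are central in both groups, and $\beta$ is normalised, so $(x,g)=(x,1)(1,g)$ in $G$. Conjugation by $(x,1)$ is trivial in both groups. Hence it suffices to compare conjugation of $(y,h)$ by $(1,g)$.

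\textbf{The conjugation computation.} In $G$, the inverse of $(1,g)$ is $(\beta(g,g^{-1})^{-1}\beta(1,1)^{-1},g^{-1})$; since $\beta$ is normalised, this is $(\beta(g,g^{-1})^{-1},g^{-1})$. Multiplying out,
$$(1,g)(y,h)(1,g)^{-1}=\bigl(y\,\beta(g,h)\,\beta(gh,g^{-1})\,\beta(g,g^{-1})^{-1},\ ghg^{-1}\bigr).$$
In $H$, the same conjugation gives $(y,ghg^{-1})$. So $\psi$ is a quandle morphism if and only if
$$\beta(g,h)\,\beta(gh,g^{-1})\,\beta(g,g^{-1})^{-1}=1 \quad\text{for all } g,h\in G_1 .$$

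\textbf{The main step.} This identity is exactly equation (\ref{eq2}) from the proof of Lemma \ref{lem}, with $\beta(1,1)=1$, except that there the second factor is $\beta(hg,g^{-1})$. Since $\beta$ is symmetric, $\beta(g,h)=\beta(h,g)$. I expect the main care to be needed here. I would apply the cocycle condition to $(g,h,g^{-1})$ and to $(h,g,g^{-1})$, use symmetry to trade $gh$ and $hg$, and conclude the required identity. Failing that, I would reprove the identity directly along the lines of Lemma \ref{lem}, using only $\beta(g,h)=\beta(h,g)$, the cocycle condition, and normalisation.

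Once the identity holds, $\psi$ is a bijective quandle morphism, so $Conj(G)\cong Conj(H)$. Together with the non-isoclinism above, this proves the statement.
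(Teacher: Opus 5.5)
\textbf{Overall.} Your structure is the paper's. Non-isoclinism follows from $|G'|>|G_1'|=|H'|$, and the identity map on $\{1,-1\}\times G_1$ is the quandle isomorphism. Your reduction is also correct. Multiplying as $((1,g)(y,h))(1,g)^{-1}$, you need
\[
\beta(g,h)\,\beta(gh,g^{-1})=\beta(g,g^{-1})\qquad (g,h\in G_1).
\]
The paper groups the product as $(x,g)((y,h)(x,g)^{-1})$ and needs instead $\beta(g,hg^{-1})\beta(h,g^{-1})=\beta(g,g^{-1})$. That is Lemma~\ref{lem} verbatim.

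\textbf{The gap.} You never carry out this main step, and the route you sketch does not close. The cocycle condition at $(g,h,g^{-1})$ and at $(h,g,g^{-1})$ gives (using $\beta(h,1)=1$)
\[
\beta(g,h)\beta(gh,g^{-1})=\beta(g,hg^{-1})\beta(h,g^{-1}),\qquad \beta(h,g)\beta(hg,g^{-1})=\beta(g,g^{-1}).
\]
Given the second identity and $\beta(g,h)=\beta(h,g)$, your identity is equivalent to $\beta(gh,g^{-1})=\beta(hg,g^{-1})$. Symmetry of $\beta$ swaps its two arguments; it does not let you reorder a product inside one argument. So \emph{use symmetry to trade $gh$ and $hg$} is unjustified. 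The equality is in fact true, but it needs a further cocycle instance.

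\textbf{The fix.} The mismatch comes from comparing with equation~(\ref{eq2}), which is Lemma~\ref{lem} after the substitution $h\mapsto hg$, rather than with the statement of the lemma. Your first cocycle instance already rewrites $\beta(g,h)\beta(gh,g^{-1})$ as $\beta(g,hg^{-1})\beta(h,g^{-1})$. This equals $\beta(g,g^{-1})$ by Lemma~\ref{lem}, since $\beta(1,1)=1$. Equivalently, use the second instance at $(hg^{-1},g,g^{-1})$ instead of $(h,g,g^{-1})$.

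You can also argue directly. Symmetry gives $\beta(gh,g^{-1})=\beta(g^{-1},gh)$. The cocycle condition at $(g^{-1},g,h)$ then gives
\[
\beta(g,h)\beta(g^{-1},gh)=\beta(g^{-1},g)\beta(1,h)=\beta(g,g^{-1}),
\]
since $\beta(1,h)=\beta(1,1)=1$. With this one line inserted, your proof is complete and is essentially the paper's argument.
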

\begin{proof}
The derived subgroup of $H$ is isomorphic to $G_1'$ and the derived subgroup of $G$ has more elements then $G_1'$. Hence, $G$ and $H$ can't be isoclinic. We will show that they have isomorphic conjugacy quandles. $G$ and $H$ has the same underlying sets. Denote by $\psi: H\to G$ the identity mapping. For $(x,g),(y,h)\in H$, we have that 
$$\psi((x,g)(y,h))=\beta(g,h)^{-1}\psi(x,g)\psi(y,h).$$
where $\beta(g,h)$ is identified to $(\beta(g,h),1)$. Hence, 
\begin{align*}
\psi((x,g)(y,h)(x,g)^{-1})&=\beta(g,hg^{-1})^{-1}\psi(x,g)\psi((y,h)(x,g)^{-1})\\
&=\beta(g,hg^{-1})^{-1}\beta(h,g^{-1})^{-1}\psi(x,g)\psi(y,h)\psi((x,g)^{-1})\\
&=\beta(g,hg^{-1})^{-1}\beta(h,g^{-1})^{-1}\beta(g,g^{-1})\psi(x,g)\psi(y,h)\psi(x,g)^{-1}\\
&=\psi(x,g)\psi(y,h)\psi(x,g)^{-1},
\end{align*}
where the last equality follows from the last lemma since $\beta$ is normalized and symmetric. This equation proves that the bijection $\psi$ is a quandle morphism and hence $G$ and $H$ has isomorphic conjugacy quandles.
\end{proof}

 \end{document}